\newcommand{\nc}{\newcommand}
 \nc{\cl}{\centerline}
 \renewcommand{\l}{{\rm len}}
 \nc{\SL}{{\rm SL}}
 \nc{\hatQ}{{\hat Q}}
 \nc{\sgn}{{\rm sgn}}
 \nc{\seee}{\mathbb C}
 \newcommand{\id}{{\rm id}}
 \nc{\hatlambda}{{\hat\lambda}}
 \nc{\daggerlambda}{{\lambda^\dagger}}
  \newcommand{\dotG}{{\dot{G}}}
    \newcommand{\dotL}{{\dot{L}}}
  \newcommand{\dotE}{{\dot{E}}}
  \newcommand{\resp}{{resp.\,}}
  \newcommand{\Hec}{{\rm Hec}}
\nc\diag{{\rm diag}}
\renewcommand{\vert}{{\,|\,}}
\nc{\hatL}{{\hat L}}
\nc{\barE}{{\bar   E}}
\nc{\D}{{\mathcal D}}
\nc{\E}{{\mathcal E}}
\nc{\F}{{\mathcal F}}
\nc{\FF}{{\mathcal F}}
\nc{\I}{{\mathcal I}}
\nc{\even}{{\rm e}}
\nc{\ep}{\epsilon}
\nc{\odd}{{\rm o}}
\nc{\Coker}{{\rm Coker}}
\nc{\olE}{{\overline E}}
\nc{\indBG}{{\rm ind}_B^G\,}
\nc{\indHG}{{\rm ind}_H^G\,}
\nc{\que}{{\mathbb Q}}
\nc{\barlambda}{{\bar\lambda}}
\nc{\barmu}{{\bar\mu}}
\nc{\barnu}{{\bar\nu}}
\nc{\bartau}{{\bar\tau}}
\nc{\barm}{{\bar m}}
\nc{\divind}{{\rm div.ind}}
\nc{\tl}{{\tilde{\lambda}}}
\nc{\dar}{\downarrow}
\nc{\Sym}{{\rm \Sym}}
\nc{\Symm}{{\rm Sym}}
\newcommand{\q}{\quad}
\newcommand{\de}{\delta}
\newcommand{\Mod}{{\rm Mod}}
\renewcommand{\mod}{{\rm mod}}
\newcommand{\Sp}{{\rm Sp}}
\newcommand{\bs}{\bigskip}
\renewcommand{\vert}{\,|\,}
\renewcommand{\sgn}{{\rm sgn}}
\newcommand{\pol}{{\rm pol}}
\newcommand{\ind}{{\rm ind}}
\renewcommand{\vert}{\,|\,}
 \newcommand{\tbw}{\textstyle\bigwedge}
\newcommand{\zed}{{\mathbb Z}}
\newcommand{\Ext}{{\rm Ext}}
\newcommand{\Hom}{{\rm Hom}}
\newcommand{\cf}{{\rm cf}}
\renewcommand{\mod}{{\rm mod}}
\newcommand{\GL}{{\rm GL}}
\renewcommand{\mod}{{\rm{mod}}}
\nc{\geom}{{\rm geom}}
\nc{\rep}{{\rm rep}}
\newcommand{\ch}{{\rm ch\,}}
\newtheorem{definition}{Definition}[section]
\newtheorem{theorem}[definition]{Theorem}
\newtheorem{lemma}[definition]{Lemma}
\newtheorem{corollary}[definition]{Corollary}
\newtheorem{example}[definition]{Example}
\newtheorem{remark}[definition]{Remark}
\newtheorem{thm}{Theorem}[subsection]
\newtheorem{prop}[thm]{Proposition}
\newtheorem{lem}[thm]{Lemma}
\newtheorem{corol}[thm]{Corollary}
\newtheorem{exam}[thm]{Example}
\newtheorem{rmk}[thm]{Remark}
\begin{document}



\centerline{\bf Decompositions of some Specht modules I}

\bigskip

\centerline{Stephen Donkin  and Haralampos Geranios}

\bigskip

{\it Department of Mathematics, University of York, York YO10 5DD}\\

\medskip

{\tt stephen.donkin@york.ac.uk,  haralampos.geranios@york.ac.uk}

\bs

\bs

\centerline{11  November  2018}
\bs\bs\bs

\section*{Abstract}

\q We give a decomposition as a direct sum of indecomposable modules of several types of Specht modules in characteristic $2$.  These include the Specht modules labelled by hooks, whose decomposability was considered  by Murphy, \cite{Gwen}. Since the main arguments  are essentially no more difficult for Hecke algebras at parameter $q=-1$, we proceed in this level of generality.

\section{Introduction}

\q   Let $K$ be a field and $r$ a positive integer. We write $\Sigma_r$ for the symmetric group of degree $r$. For each partition  $\lambda$ of $r$  we have the Specht module $\Sp(\lambda)$ and for each composition $\alpha$ of $r$ the permutation module  $M(\alpha)$. The Specht module $\Sp(\lambda)$ may be viewed as a submodule of $M(\lambda)$.   James proved, \cite[13.17]{James},  that   unless the characteristic of $K$ is $2$ and $\lambda$ is $2$-singular, the space of homomorphisms $\Hom_{\Sigma_r}(\Sp(\lambda),M(\lambda)) $ is one dimensional. It follows that $\Sp(\lambda)$ has one dimensional endomorphism algebra and in particular that $\Sp(\lambda)$ is indecomposable (unless $K$ has characteristic $2$ and $\lambda$ is $2$-singular). 

\q We now suppose $K$ has characteristic $2$. Then, for $\lambda$ a $2$-singular partition, the Specht module $\Sp(\lambda)$ may certainly decompose but in general neither a criterion for decomposability nor the nature of a decomposition as a direct sum of indecomposable components, is known. The first example of such a module was discovered by James for the symmetric group $\Sigma_7$ and the Specht module $\Sp(5,1,1)$, see \cite{Jam1}. Some years later,  Murphy generalised James' example and in \cite{Gwen} she gave a criterion for the decomposability of Specht modules labelled by hook partitions, i.e. partitions of the form $\lambda=(a,1^b)$. More recently,  Dodge and  Fayers found in \cite{DF} some new decomposable Specht modules for partitions of the form $\lambda=(a,3,1^b)$. 

\q In the more general context of the Hecke algebras $H_q(r)$, Dipper and James showed in \cite{DJ2} that for $q\neq -1$ the corresponding Specht modules are indecomposable. Recently Speyer generalised Murphy's criterion regarding the decomposability of Specht modules labeled by hook partitions for Hecke algebras with $q=-1$, see \cite{Spey}.   

\q We here obtain many new families of decomposable Specht modules  for Hecke algebras at parameter $q=-1$ and describe explicitly their indecomposable components. More precisely, we give a decomposition of the Specht modules $\Sp(a,m-1,m-2,\ldots,2,1^b)$, with $a\geq m$, $b\geq 1$ and $a-m$ even and $b$ odd. Moreover, we show that the number of indecomposable summands in such decompositions is unbounded. We also point out that the decomposition of $\Sp(a,m-1,m-2,\ldots,2,1^b)$ lays the foundations for the discovery of many other families of decomposable Specht modules. In fact, using this approach we describe decompositions of Specht modules of the form $\Sp(a,3,1^b)$ which don't appear in the list produced by Dodge and Fayers. More results in this direction will appear in a follow up paper, \cite{DGMoreDeco}.

\q Our method is to compare the situation with an analogous problem for certain modules for the general linear groups and apply the Schur functor,  as in \cite[Section 2.1]{q_Schur}. 

\begin{remark} We shall produce a decomposition of the Specht module $\Sp(\lambda)$,   for certain partitions $\lambda$.  In these cases $\Sp(\lambda)$ will actually be a Young module (though not in general indecomposable). We note that in fact we already have a supply of cases in which $\Sp(\lambda)$ is a Young module, namely when $\lambda$ is,  in the terminology of \cite{DGInjective},  a Young partition, \cite[Proposition 3.2 (i)]{DGInjective}. 

\q However, in these cases  the $\Sp(\lambda)$ is the indecomposable Young module $Y(\lambda)$. So these partitions are not of interest from the point of view of our current investigation.
\end{remark}

\section{Preliminaries}

\subsection{Combinatorics}

\q The  standard  reference for  the polynomial representations of $\GL_n(K)$ is the monograph \cite{EGS}. Though we work in the quantised context this reference is appropriate  as the combinatorics is  essentially the same and we adopt the notation of \cite{EGS} wherever  convenient. Further details may also be found in the monograph \cite{q_Schur}, which treats the quantised case.

\quad We begin by introducing some of the associated combinatorics.  By a partition we mean an infinite  sequence $\lambda=(\lambda_1,\lambda_2,\ldots)$ of nonnegative integers with $\lambda_1\geq\lambda_2\geq \ldots$ and $\lambda_j=0$ for $j$ sufficiently large.   If $n$ is a positive integer such that $\lambda_j=0$ for $j>n$ we identify $\lambda$ with the finite sequence $(\lambda_1,\ldots,\lambda_n)$.  The length $\l(\lambda)$ of  a partition $\lambda=(\lambda_1,\lambda_2,\ldots)$ is $0$ if $\lambda=0$ and  is the positive integer $n$ such that  $\lambda_n\neq 0$, $\lambda_{n+1}=0$, if $\lambda\neq 0$. For a partition $\lambda$, we denote  by $\lambda'$ the transpose partition of $\lambda$. We define the degree of $\lambda=(\lambda_1,\lambda_2,\ldots)$ by $\deg(\lambda)=\lambda_1+\lambda_2+\cdots$.

\q We fix a positive integer $n$. We set $X(n)=\zed^n$. There is a natural partial order on $X(n)$. For $\lambda=(\lambda_1,\ldots,\lambda_n), \mu=(\mu_1,\ldots,\mu_n)\in X(n)$,  we write $\lambda\leq \mu$ if $\lambda_1+\cdots+\lambda_i\leq \mu_1+\cdots+\mu_i$ for $i=1,2,\ldots,n-1$ and $\lambda_1+\cdots+\lambda_n=\mu_1+\cdots+\mu_n$. We shall use the standard $\zed$-basis   $\ep_1,\ldots,\ep_n$ of   $X(n)$,  where $\ep_i=(0,\ldots,1,\ldots,0)$ (with $1$ in the $i$th position), for $1\leq i\leq n$. We write 
$\omega_i$ for the element $\ep_1+\cdots+\ep_i$ of $X(n)$, for $1\leq i\leq n$. 

\q We write $X^+(n)$ for the set of dominant $n$-tuples of integers, i.e., the set of elements $\lambda=(\lambda_1,\ldots,\lambda_n)\in X(n)$ such that $\lambda_1\geq \cdots\geq  \lambda_n$. We write $\Lambda(n)$ for the set of $n$-tuples of nonnegative integers and $\Lambda^+(n)$ for the set of partitions into at most $n$-parts, i.e.,  $\Lambda^+(n)=X^+(n)\bigcap \Lambda(n)$. We shall sometimes refer to elements of $\Lambda(n)$ as polynomial weights and to elements of $\Lambda^+(n)$ as polynomial dominant weights. For a nonnegative integer $r$ we write $\Lambda^+(n,r)$ for the set of partitions of $r$ into at most $n$ parts, i.e., the set of elements of $\Lambda^+(n)$ of degree $r$.

\q We fix a positive integer $l$.  We write $X_1(n)$ for the set of $l$-restricted partitions into at most $n$ parts, i.e., the set of elements $\lambda=(\lambda_1,\ldots,\lambda_n)\in \Lambda^+(n)$ such that $0\leq \lambda_1-\lambda_2,\ldots,\lambda_{n-1}-\lambda_n, \lambda_n<l$.  
 
 \q A dominant  weight $\lambda\in X^+(n)$ has a unique expression $\lambda=\lambda^0+l\barlambda$ with $\lambda^0\in X_1(n)$, $\barlambda\in X^+(n)$, moreover if $\lambda\in\Lambda^+(n)$ then $\barlambda\in \Lambda^+(n)$. We shall use this notation a great deal in what follows.

\subsection{Rational Modules and Polynomial Modules}

\q Let $K$ be a field. If $V,W$ are vector spaces over $K$, we write $V\otimes W$ for the tensor product $V\otimes_K W$.   We shall be working with the representation theory of quantum groups over $K$. By the category of quantum groups over $K$ we understand the opposite category of the category of Hopf algebras over $K$. Less formally we shall use the expression \lq\lq $G$ is a quantum group" to indicate that we have in mind a Hopf algebra over $K$ which we denote $K[G]$ and call the coordinate algebra of $G$.  We say that $\phi:G\to H$ is a morphism of quantum groups over $K$ to indicate that we have in mind a morphism of Hopf algebras over $K$, from $K[H]$ to $K[G]$, denoted $\phi^\sharp$ and called the co-morphism of $\phi$.   We will say $H$ is a quantum subgroup of the quantum group $G$, over $K$, to indicate that $H$ is a quantum group with coordinate algebra $K[H]=K[G]/I$, for some Hopf ideal $I$ of $K[G]$, which we call the defining ideal of $H$.  The inclusion morphism $i:H\to G$ is the morphism of quantum groups whose co-morphism $i^\sharp:K[G]\to K[H]=K[G]/I$ is the natural map. 

\q Let $G$ be a quantum group over $K$. The category of  left (\resp right) $G$-modules is the the category of right (\resp left) $K[G]$-comodules.  We write $\Mod(G)$ for the category of left $G$-modules and $\mod(G)$ for the category of finite dimensional left $G$-modules.  We shall also call a $G$-module a rational $G$-module (by analogy with the representation theory of algebraic groups).  A $G$-module will mean a left $G$-module unless  indicated otherwise. For a finite dimensional $G$-module $V$ and a non-negative integer $d$ we write $V^{\otimes d}$ for the $d$-fold tensor product $V\otimes\cdots \otimes V$ and we write $V^{(d)}$ for the direct sum $V\oplus\cdots\oplus V$ of $d$ copies of $V$.

 \q Let $V$ be a finite dimensional $G$-module with structure map $\tau: V\to V\otimes K[G]$.  The coefficient space $\cf(V)$ of $V$ is the subspace of $K[G]$ spanned by the \lq\lq coefficient elements" $f_{ij}$, $1\leq i,j\leq m$, defined with respect to a basis $v_1,\ldots, v_m$ of $V$, by the equations 
 $$\tau(v_i)=\sum_{j=1}^m v_j\otimes f_{ji}$$
 for $1\leq i\leq m$.  The coefficient space $\cf(V)$ is independent of the choice of basis and is a subcoalgebra of $K[G]$.

\q We fix a positive integer $n$. We shall be working with $G(n)$, the quantum general linear group of degree $n$, as in \cite{q_Schur}.   We fix a non-zero element $q$ of $K$. 
We have a $K$-bialgebra $A(n)$ given by generators $c_{ij}$, $1\leq i,j\leq n$, subject to certain relations (depending on $q$) as in \cite[0.20]{q_Schur}.  The comultiplication map  $\delta:A(n)\to A(n)\otimes A(n)$ satisfies $\de(c_{ij})=\sum_{r=1}^n c_{ir}\otimes c_{rj}$ and the augmentation map $\ep:A(n)\to K$ satisfies $\ep(c_{ij})=\de_{ij}$ (the Kronecker delta), for $1\leq i,j\leq n$.  The elements $c_{ij}$ will be called the coordinate elements and we define the determinant element
$$d_n=\sum_{\pi\in \Symm(n)} \sgn(\pi) c_{1,\pi(1)}\ldots c_{n,\pi(n)}.$$
Here  $\sgn(\pi)$ denotes the sign of the permutation $\pi$. We form the Ore localisation $A(n)_{d_n}$. The comultiplication map $A(n)\to A(n)\otimes A(n)$ and augmentation map $A(n)\to K$ extend uniquely to $K$-algebraic maps $A(n)_{d_n}\to A(n)_{d_n}\otimes A(n)_{d_n}$ and $A(n)_{d_n}\to K$, giving $A(n)_{d_n}$ the structure of a Hopf algebra. By the quantum general linear group $G(n)$ we mean the quantum group over $K$ with coordinate algebra $K[G(n)]=A(n)_{d_n}$.

\q We write $T(n)$ for the quantum subgroup of $G(n)$ with defining ideal generated by all $c_{ij}$ with $1\leq i,j\leq n$, $i\neq j$.  We write $B(n)$ for quantum subgroup of $G(n)$ with defining ideal generated by all $c_{ij}$ with $1\leq i <j\leq n$. We call $T(n)$ a maximal torus and $B(n)$ a Borel subgroup of $G(n)$ (by analogy with the classical case).

\q We now assign to a finite dimension rational $T(n)$-module its formal character. We form the integral group ring $\zed X(n)$. This has $\zed$-basis of formal exponentials $e^\lambda$, which multiply according to the rule $e^\lambda e^\mu=e^{\lambda+\mu}$, $\lambda,\mu\in X(n)$. For $1\leq i\leq n$ we define ${\bar c}_{ii}=c_{ii}+I_{T(n)}\in K[T(n)]$, where $I_{T(n)}$ is the defining ideal of the quantum subgroup $T(n)$ of $G(n)$.  Note that ${\bar c}_{11}\ldots {\bar c}_{nn}=d_n+I_{T(n)}$, in particular each ${\bar c}_{ii}$ is invertible in $K[T(n)]$. For $\lambda=(\lambda_1,\ldots,\lambda_n)\in X(n)$ we define ${\bar c}^\lambda={\bar c}_{11}^{\lambda_1}\ldots {\bar c}_{nn}^{\lambda_n}$. The elements ${\bar c}^\lambda$, $\lambda\in X(n)$, are group-like and form a $K$-basis of $K[T(n)]$. 
For $\lambda=(\lambda_1,\ldots,\lambda_n)\in X(n)$, we write $K_\lambda$ for $K$ regarded as a (one dimensional) $T(n)$-module with structure map $\tau:K_\lambda\to K_\lambda\otimes K[T(n)]$ given by $\tau(v)=v\otimes {\bar c}^\lambda$, $v\in K_\lambda$.  For a finite dimensional  rational $T(n)$-module $V$ with structure map $\tau:V\to V\otimes K[T(n)]$  and $\lambda\in X(n)$ we have the weight space 
$$V^\lambda=\{v\in V\vert \tau(v) =v\otimes {\bar c}^\lambda\}.$$  
Moreover, we have the weight space decomposition $V=\bigoplus_{\lambda\in X(n)} V^\lambda$. 
We say that $\lambda\in X(n)$ is a weight of $V$ if $V^\lambda\neq 0$. 
The dimension of a finite dimensional vector space $V$ over $K$ will be denoted by $\dim V$. 
The character $\ch V$ of a finite dimensional rational $T(n)$-module $V$ is the element of $\zed X(n)$ defined by
$\ch V=\sum_{\lambda\in X(n)} \dim V^\lambda e^\lambda$.   

\q For each $\lambda\in X^+(n)$ there is an irreducible rational $G(n)$-module $L_n(\lambda)$ which has unique highest weight $\lambda$, and $\lambda$ occurs as a weight with multiplicity one. The modules $L_n(\lambda)$, $\lambda\in X^+(n)$, form a complete set of pairwise non-isomorphic irreducible rational  $G$-modules. For a finite dimensional rational $G(n)$-module $V$ and $\lambda\in X^+(n)$ we write $[V:L_n(\lambda)]$ for the multiplicity of $L_n(\lambda)$ as a composition factor of $V$. 

\q We write $D_n$ for the one dimensional $G(n)$-module corresponding to the determinant. Thus $D_n$ has structure map $\tau:D_n\to D_n\otimes K[G]$, given by $\tau(v)=v\otimes d_n$, for $v\in D_n$. We have  $D_n=L_n(\omega_n)=L_n(1,1,\ldots,1)$. We write $E_n$ for the natural $G(n)$-module. Thus  $E_n$ has basis $e_1,\ldots,e_n$,  and the structure map $\tau:E_n\to   E_n\otimes K[G(n)]$ is given by $\tau(e_i)=\sum_{j=1}^n e_j\otimes c_{ji}$. We also have that $E_n=L_n(1,0,\dots,0)$.

\q A finite dimensional $G(n)$-module $V$ is called polynomial if $\cf(V) \leq A(n)$. The modules $L_n(\lambda)$, $\lambda\in \Lambda^+(n)$, form a complete set of pairwise non-isomorphic irreducible polynomial $G(n)$-modules. We have a grading   $A(n)=\bigoplus_{r=0}^\infty  A(n,r)$ in  such a way that each $c_{ij}$ has degree $1$. Moreover each $A(n,r)$ is a finite dimensional subcoalgebra of $A(n)$. The dual algebra $S(n,r)$ is known as the Schur algebra.  A finite dimensional $G(n)$-module $V$ is polynomial of degree $r$ if $\cf(V)\leq A(n,r)$.  We write $\pol(n)$ (\resp $\pol(n,r)$)  for the full subcategory of $\mod(G(n))$ whose objects are the polynomial modules (\resp the modules which are polynomial of degree $r$).

\q For an arbitrary finite dimensional polynomial $G(n)$-module we may write $V$ uniquely as a direct sum $V=\bigoplus_{r=0}^\infty V(r)$ in such a way that $V(r)$ is polynomial of degree $r$, for $r\geq 0$.  Let $r\geq 0$. The modules $L_n(\lambda)$, $\lambda\in\Lambda^+(n,r)$, form a complete set of pairwise non-isomorphic irreducible polynomial $G(n)$-modules which are polynomial of degree $r$.  We write $\mod(S)$ for the category of left modules for a finite dimensional $K$-algebra $S$.  The category 
 $\pol(n,r)$ is naturally equivalent to the category $\mod(S(n,r))$.

\q We shall also need modules induced from $B(n)$ to $G(n)$.  (For details of the induction functor $\Mod(B(n))\to \Mod(G(n))$ see, for example, \cite{DStd}.)  For $\lambda\in X(n)$ there is a unique (up to isomorphism) one dimensional $B(n)$-module whose restriction to $T(n)$ is  $K_\lambda$. We also denote this module by $K_\lambda$. The induced module $\ind_{B(n)}^{G(n)} K_\lambda$ is non-zero if and only if $\lambda\in X^+(n)$. For $\lambda\in X^+(n)$ we set $\nabla_n(\lambda)=\ind_{B(n)}^{G(n)}  K_\lambda$. Then $\nabla_n(\lambda)$ is finite dimensional  and its character is given by Weyl's character formula. In fact, for $\lambda\in \Lambda^+(n)$ the character of $\nabla_n(\lambda)$ is the Schur symmetric function corresponding to $\lambda$. The $G(n)$-module socle of $\nabla_n(\lambda)$ is $L_n(\lambda)$. The module $\nabla_n(\lambda)$ has unique highest weight $\lambda$ and this weight occurs with multiplicity one.

\q A filtration $0=V_0\leq V_1\leq \cdots\leq V_r=V$ of  a finite dimensional rational $G(n)$-module $V$ is said to be {\it good} if for each $1\leq i\leq r$ the quotient $V_i/V_{i-1}$ is either zero or isomorphic to $\nabla_n(\lambda^i)$ for some $\lambda^i\in X^+(n)$.  For a rational $G(n)$-module $V$ admitting a good filtration for each $\lambda\in X^+(n)$, the multiplicity 
$|\{1\leq i\leq r\vert V_i/V_{i-1}\cong \nabla_n(\lambda)\}|$ is independent of the choice of the good filtration, and will be denoted $(V:\nabla_n(\lambda))$. 

\q For $\lambda,\mu\in X^+(n)$ we have $\Ext^1_{G(n)}(\nabla_n(\lambda),\nabla_n(\mu))=0$ unless $\lambda>\mu$. Given Kempf's Vanishing Theorem, \cite[Theorem 3.4]{q_Schur},  this follows exactly as in the classical case, e.g., \cite[Lemma 3.2.1]{D4}, (or the original source \cite[Corollary (3.2)]{CPS}). It  follows that if $V$ has a good  filtration 
$0=V_0\leq V_1\leq \cdots\leq V_t =V$ with sections $V_i/V_{i-1}\cong \nabla_n(\lambda_i)$, $1\leq i\leq t$, and $\mu_1,\ldots,\mu_t$ is a reordering of the $\lambda_1,\ldots,\lambda_t$ such that $\mu_i<\mu_j$ implies that $i<j$ then there is a good filtration $0=V_0'<V_1'<\cdots <V_t'=V$ with $V_i'/V_{i-1}'\cong \nabla_n(\mu_i)$, for $1\leq i\leq t$. 

\q Similarly it will be of great practical use  to know that\\
 $\Ext^1_{G(n)}(\nabla_n(\lambda),\nabla_n(\mu))=0$ when $\lambda$ and $\mu$ belong to different blocks. Here  the relationship with cores of partitions diagrams will be crucial for us.  For a partition $\lambda$ we denote by $[\lambda]$ the corresponding partition diagram (as in \cite{EGS}). The $l$-core of $[\lambda]$ is the diagram obtained by removing skew $l$-hooks,  as in \cite{James}. If $\lambda,\mu\in \Lambda^+(n,r)$ and $[\lambda]$ and $[\mu]$ have different $l$-cores then the simple modules $L_n(\lambda)$ and $L_n(\mu)$ belong to different blocks and it follows in particular that $\Ext^i_{S(n,r)}(\nabla(\lambda),\nabla(\mu))=0$, for all $i\geq 0$.  A precise  description of the blocks of the $q$-Schur algebras was found by Cox, see \cite[Theorem 5.3]{Cox}. For a polynomial $G(n)$-module $V$ and an $l$-core $\gamma$ we mean by the expression, the component of $V$ corresponding to $\gamma$, the sum of all block components for blocks consisting of partitions with core $\gamma$.

\q  For $\lambda\in \Lambda^+(n,r)$ we write $I_n(\lambda)$ for the injective envelope of $L_n(\lambda)$ in the category of polynomial modules. We have that $I_n(\lambda)$ is a finite dimensional module which is polynomial of degree $r$. Moreover, the module $I_n(\lambda)$ has a good filtration and we have the reciprocity formula $(I_n(\lambda):\nabla_n(\mu))=[\nabla_n(\mu):L_n(\lambda)]$ see e.g., \cite[Section 4, (6)]{DStd}.

\subsection{The Frobenius Morphism}

\q It will be important for us to make a comparison with the classical case $q=1$.   In this case we will write $\dotG(n)$ for  $G(n)$ and write $x_{ij}$ for the coordinate element $c_{ij}$, $1\leq i,j\leq n$. Also, we write $\dotL_n(\lambda)$ for the $\dotG$-module $L_n(\lambda)$, $\lambda\in X^+(n)$, and write $\dotE_n$ for $E_n$.

\q  We return to the general situation.  If $q$ is not a root or unity, or if $K$ has characteristic $0$ and $q=1$ then all $G(n)$-modules are completely reducible, see e.g.,  \cite[Section 4, (8)]{DStd}.  We therefore assume from now on that $q$ is a root of unity and that if $K$ has characteristic $0$ then $q\neq 1$.  Also, from now on, $l$ is the smallest positive integer such that $1+q+\cdots+q^{l-1}=0$.

\q Now we have a morphism of Hopf algebras $\theta: K[\dotG(n)]\to K[G(n)]$ given by $\theta(x_{ij})=c_{ij}^l$, for $1\leq i,j\leq n$.  We write $F:G(n)\to \dotG(n)$ for the morphism of quantum groups such that $F^\sharp=\theta$.  Given a $\dotG(n)$-module $V$ we write $V^F$ for the corresponding $G(n)$-module. Thus, $V^F$ as a vector space is $V$ and if the $\dotG(n)$-module $V$ has structure map $\tau:V\to V\otimes K[\dotG(n)]$ then $V^F$ has structure map $(\id_V \otimes F)\circ \tau: V^F \to  V^F\otimes K[G(n)]$, where $\id_V:V\to V$ is the identity map on the vector space  $V$.  

\q  For an element $\phi=\sum_{\xi\in X(n)} a_\xi e^\xi$ of $\zed X(n)$ we write $\phi^F$ for the element $\sum_{\xi\in X(n)}a_\xi e^{l\xi}$.  Then, for a finite dimensional $\dotG(n)$-module $V$ we have $\ch V^F= (\ch V)^F$.   Moreover, we have the following relationship between the irreducible modules for $G(n)$ and ${\dotG(n)}$, see \cite[Section 3.2, (5)]{q_Schur}.

\bs

{\bf Steinberg's Tensor Product Theorem}  \sl For  $\lambda^0\in X_1(n)$ and $\barlambda\in X^+(n)$ we have 
$$L_n(\lambda^0+l\barlambda)\cong L_n(\lambda^0)\otimes \dotL_n(\barlambda)^F.$$

\rm

\bs

\q  Usually we shall abbreviate the quantum groups  $G(n)$, $B(n)$, $T(n)$ to  $G$, $B$, $T$ and $\dotG(n)$ to $\dotG$.   Likewise,  we usually abbreviate the modules 
$L_n(\lambda)$, $\nabla_n(\lambda)$,  $I_n(\lambda)$ and $\dotL_n(\lambda)$ to $L(\lambda)$, $\nabla(\lambda)$, $I(\lambda)$ and $\dotL(\lambda)$, for $\lambda\in \Lambda^+(n)$, and abbreviate the modules $E_n$ and $D_n$ to $E$ and $D$.

\subsection{A truncation functor}

\q Let $N,n$ be positive integers with $N\geq n$.   We identify $G(n)$ with the quantum subgroup of $G(N)$ whose defining ideal is generated by all $c_{ii}-1$, $n<i\leq N$, and all $c_{ij}$ with $1\leq i\neq j\leq N$ and $i>n$ or $j>n$.  We have an exact functor (the truncation functor) $d_{N,n}:\pol(N)\to \pol(n)$ taking $V\in \pol(N)$ to the $G(n)$ submodule  $\bigoplus_{\alpha\in \Lambda(n)}  V^\alpha$  of $V$ and taking a morphism of polynomial modules $V\to V'$ to its restriction $d_{N,n}(V)\to d_{N,n}(V')$. For a discussion of this functor at the level of modules for Schur algebras in the classical case see \cite[Section 6.5]{EGS}.

 \q By \cite[Section 4.2]{q_Schur}, the functor $d_{N,n}$ has the following properties:

(i) for polynomial $G(N)$-modules $X,Y$ we have $d_{N,n}(X\otimes Y)=d_{N,n}(X)\otimes d_{N,n}(Y)$;\\
(ii)  for $\lambda\in \Lambda^+(N,r)$ and $X_\lambda=L_N(\lambda)$ or $\nabla_N(\lambda)$  then $d_{N,n}(X_\lambda)\neq0$ if and only if $\lambda\in \Lambda^+(n,r)$;\\
(iii)  for  $\lambda\in \Lambda^+(n,r)$, $d_{N,n}(L_N(\lambda))=L_n(\lambda)$ and $d_{N,n}(\nabla_N(\lambda))=\nabla_n(\lambda)$.

\medskip

\q Let $\lambda\in \Lambda^+(N,r)$,  for some $r\geq 0$, $\alpha\in \Lambda(N,r)$ and $\lambda_i=\alpha_i=0$, for $n<i\leq N$. We identify $\lambda$ and $\alpha$ with elements of $\Lambda^+(n,r)$ and $\Lambda(n,r)$ in the obvious way.  It follows that $\dim L_N(\lambda)^\alpha=\dim L_n(\lambda)^\alpha$.

\subsection{Connections with the Hecke algebras}

\q We now record some connections with representations of  Hecke algebra of type $A$. We fix a positive integer $r$.  We write $\l(\pi)$ for the length of a permutation $\pi$.   The Hecke algebra $\Hec(r)$ is the $K$-algebra with basis $T_w$, $w\in \Symm(r)$, and multiplication satisfying

\begin{align*}&T_wT_{w'}=T_{ww'}, \quad \hbox{ if } \l(ww')=\l(w)+\l(w'), \hbox{and}\cr
&(T_s+1)(T_s-q)=0
\end{align*}
for $w,w'\in \Symm(r)$ and a basic transposition $s\in \Symm(r)$. For brevity we will denote the Hecke algebra $\Hec(r)$ by $H(r)$.

\q  For $\lambda$ a partition of degree $r$ we denote  by $\Sp(\lambda)$ the corresponding (Dipper-James) Specht module and by $Y(\lambda)$ the corresponding Young module. For $\alpha\in \Lambda(n,r)$ we write $M(\alpha)$ for the permutation module corresponding to $\alpha$.

\q Let $n\geq r$. We have the Schur functor $f:\mod(S(n,r))\to \mod(H(r))$, see \cite[Section 2.1]{q_Schur}. By \cite[Sections 4.4 and 4.5]{q_Schur} we have that the functor $f$ has the following properties:

(i) $f$ is exact;\\
(ii) for $\lambda\in \Lambda^+(n,r)$ we have $f(\nabla(\lambda))=\Sp(\lambda)$;\\
(iii)  for  $\lambda\in \Lambda^+(n,r)$ we have  $f(I(\lambda))=Y(\lambda)$.\\

\q For a finite string of non-negative integers $\alpha=(\alpha_1,\ldots,\alpha_m)$ we  have the polynomial $G(n)$-modules 
$$S^\alpha E=S^{\alpha_1}E\otimes\cdots \otimes S^{\alpha_m}E$$
 and 
 $$\tbw^\alpha E =\tbw^{\alpha_1} E \otimes \cdots \otimes \tbw^{\alpha_m} E.$$
 
 \q For  $\alpha\in\Lambda(n,r)$ we write $H(\alpha)$ for the subalgebra $H(\alpha_1)\otimes\cdots \otimes H(\alpha_n)$ of $H(r)$.  By \cite[Section 2.1, (20)]{q_Schur} we have that: 

i) $f(S^\alpha E)=H(r)\otimes_{H(\alpha)}K=M(\alpha)$;\\
ii) $f(\Lambda^\alpha E)=H(r)\otimes_{H(\alpha)}K_{\sgn}$.

\section{Some weight space multiplicities}

\q Recall that our standard assumption is that $q$ is a root of unity and that if the base filed $K$ has characteristic $0$ then $q\neq 1$.  Also, $l$ is the smallest positive integer such that $1+q+\cdots+q^{l-1}=0$.

\q If $K$ has characteristic $p>0$, the base $p$-expansion of a positive integer $r$ will be written $r=\sum_{i=0}^\infty p^ir_i$ or just $r=\sum_{i=0}^N p^i r_i$, if $r<p^{N+1}$.

\begin{definition} Let $r, b$ be integers  with $r\geq 0$.  We shall say that  the pair $(r,b)$ is $p$-special if 

 i) for $p=0$ we have that $-r\leq b\leq r$ and $r-b$ is even;

ii) for $p>0$ and $r=\sum_{i=0}^\infty p^i r_i$ be the $p$ expansion of $r$, there exists an expression $b=\sum_{i=0}^\infty p^i t_i$ with $-r_i\leq  t_i \leq r_i$ and $r_i-t_i$  even for all $i\geq 0$.
 
\end{definition}

\begin{definition}

Let $s, a$ be integers  with $s\geq 0$.  We write $s=s_0+l\bar{s}$ with $0\leq s_0\leq l-1$. We shall say that  the pair $(s,a)$ is $(l,p)$-special if  there exists an expression $a=a_0+l\bar{a}$, with $-s_0\leq  a_0 \leq s_0$, $s_0-a_0$ even and $(\bar{s},\bar{a})$ $p$-special.

\end{definition}

\q From Steinberg's tensor product theorem for $G(2)$,  we get  the following, which  in the case $l=2$  is crucial for our analysis of a decomposition of some Specht modules. 

\begin{lemma} For a partition $(c,d)\in \Lambda^+(2,r)$ and for  $(a,b)\in \Lambda(2,r)$, we have 
$$\dim L(c,d)^{(a,b)}=\begin{cases} 1, & \hbox{   if  }    (c-d,a-b) \hbox{ is $(l,p)$-special};\cr
0, & \hbox{ otherwise.}
\end{cases}
$$

\end{lemma}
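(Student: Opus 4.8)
First I would reduce to the modules $L(m,0)$ using the determinant twist. Since the determinant module $D=L(1,1)$ is one dimensional of weight $(1,1)$ and invertible, one has $L(c,d)\cong L(c-d,0)\otimes D^{\otimes d}$, and $D^{\otimes d}$ shifts every weight by $(d,d)$; hence $\dim L(c,d)^{(a,b)}=\dim L(c-d,0)^{(a-d,b-d)}$. This already shows the multiplicity depends only on $m:=c-d$ and on $a-b=(a-d)-(b-d)$. The weights of $L(c,d)$ occur among those of $\nabla(c,d)$, namely $(c-j,d+j)$ for $0\le j\le c-d$, so a nonzero multiplicity forces $a,b\ge d$ and $|a-b|\le c-d$. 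When $|a-b|>c-d$ both sides vanish (the right-hand side because, as one checks from the definitions, the $(l,p)$-special condition forces $|a-b|\le c-d$), so I may assume $a':=a-d\ge 0$, $b':=b-d\ge 0$ and compute $\dim L(m,0)^{(a',b')}$ with $a'+b'=m$.

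Next I would apply Steinberg's tensor product theorem. Write $m=m_0+l\barm$ with $0\le m_0\le l-1$, so that $(m,0)=(m_0,0)+l(\barm,0)$ is the restricted-Frobenius decomposition and $L(m,0)\cong L(m_0,0)\otimes \dotL(\barm,0)^F$. Taking characters and using $\ch V^F=(\ch V)^F$, one gets
$$\dim L(m,0)^{(a',b')}=\sum \dim L(m_0,0)^{(\alpha,\beta)}\,\dim \dotL(\barm,0)^{(\gamma,\delta)},$$
the sum being over decompositions $(a',b')=(\alpha,\beta)+l(\gamma,\delta)$ with $\alpha+\beta=m_0$ and $\gamma+\delta=\barm$. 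The key observation is that, since $0\le \alpha\le m_0\le l-1$, such a decomposition is \emph{unique}: necessarily $\alpha=a'\bmod l$ and $\gamma=(a'-\alpha)/l$. Thus the sum has at most one term and $\dim L(m,0)^{(a',b')}\le 1$.

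Then I would identify the two tensor factors. Because $m_0\le l-1$, the module $L(m_0,0)=\nabla(m_0,0)$ is irreducible with weights $(\alpha,m_0-\alpha)$, $0\le\alpha\le m_0$, each of multiplicity one; the corresponding differences $a_0=2\alpha-m_0$ run over $-m_0\le a_0\le m_0$ with $m_0-a_0$ even. For the Frobenius factor I need the classical characteristic-$p$ weight multiplicities of $\dotL(\barm,0)$, and the claim is that $\dim \dotL(\barm,0)^{(\gamma,\delta)}$ is $1$ if $(\barm,\gamma-\delta)$ is $p$-special and $0$ otherwise. For $p=0$ this is immediate, as $\dotL(\barm,0)=\dotnabla(\barm,0)$ is the Weyl module, semisimple, with all multiplicity-one weights; for $p>0$ it follows by repeating the present argument for $\dotG(2)$, iterating the classical Steinberg theorem over the base-$p$ digits $r_i$ of $\barm$, each digit contributing the multiplicity-one Weyl module $\dotnabla(r_i,0)$ and a factor $p^i$ under the successive twists (uniqueness of the digit decomposition again keeping every multiplicity at most $1$).

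Finally, combining these, $\dim L(m,0)^{(a',b')}=1$ precisely when $a'-b'=a_0+l\bar a$ with $-m_0\le a_0\le m_0$, $m_0-a_0$ even, and $(\barm,\bar a)$ $p$-special (where $\bar a=\gamma-\delta$), and $0$ otherwise; by the definitions this says exactly that $(m,a'-b')=(c-d,a-b)$ is $(l,p)$-special. The main obstacle I expect is this final combinatorial matching: one must verify that the \emph{unique} weight decomposition of the second paragraph corresponds exactly to the expression demanded in the definition of $(l,p)$-special, so that the ``there exists an expression'' clause is realised by, and only by, the one decomposition contributing to the multiplicity, together with the boundary bookkeeping when $|a-b|>c-d$. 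The genuinely substantive input is the classical characteristic-$p$ computation for $\GL_2$ invoked above; everything else is the determinant reduction and a single application of quantum Steinberg.
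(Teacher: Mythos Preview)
Your proposal is correct and follows precisely the route the paper indicates: the paper does not give a written proof but simply states that the lemma follows from Steinberg's tensor product theorem for $G(2)$, and your argument is exactly a careful unpacking of that (determinant reduction to $L(m,0)$, then one quantum Steinberg step, then iterated classical Steinberg on the Frobenius factor, with uniqueness of the digit decomposition ensuring multiplicity at most one). Your more detailed treatment, including the boundary case $|a-b|>c-d$, is a faithful expansion of what the paper leaves implicit.
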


\smallskip

\q For a positive integer $m$ we write $\delta_m$ for the partition $(m,m-1,\ldots,1)$ (of length $m$)  and $\sigma_m$ for $(l-1)\de_m$. 

\begin{remark}
   Recall that if $\lambda\in \Lambda^+(n,r)$, $\alpha\in \Lambda(n,r)$ and $L(\lambda)^\alpha \neq 0$ then $\alpha\leq \lambda$. This implies that the length of $\alpha$ is at least the length of $\lambda$. If $\lambda$ and $\alpha$ have length $m$ then  
   $$\dim L(\lambda)^\alpha=\dim L_m(\lambda)^\alpha=\dim (D_m\otimes L(\lambda-\omega_m))^\alpha=\dim L(\lambda-\omega_m)^{\alpha-\omega_m}.$$

\end{remark}

\begin{lemma} For $m>0$, let  $a\geq m(l-1)$ and $b\geq (m-1)(l-1)$. Let $n\geq m$  and $\mu\in \Lambda^+(n)$.  

(i)  If  $\mu_{m+1}=0$  and $m>2$ then 
\begin{align*}\dim L(\sigma_m&+l\mu)^{(a,b,(m-2)(l-1),\ldots,2(l-1),(l-1))}\cr
&=\dim L(\sigma_{m-1}+l\mu)^{(a-(l-1),b-(l-1),(m-3)(l-1),\ldots,l-1)}
\end{align*}

and in fact  $L(\sigma_m+l\mu)^{(a,b,(m-2)(l-1),\ldots,2(l-1),(l-1))}=0$ unless $\mu_i=0$ for all $i>2$.

(ii) We have 
\begin{align*}\dim L(\sigma_m&+l\mu)^{(a,b,(m-2)(l-1),\ldots,2(l-1),(l-1))}\cr
&=\dim L(\sigma_1+l\mu)^{(a-(m-1)(l-1),b-(m-1)(l-1))}.
\end{align*}

(iii) If $a+m\equiv 0$ and $b+m\equiv 1\ \mod\ l$  and $\mu=(c,d)$ then  

\begin{align*}\dim L(\sigma_m&+l\mu)^{(a,b,(m-2)(l-1),\ldots,2(l-1),(l-1))}\cr
&=\begin{cases}1, & \hbox{ if }  (c-d,u-v) \hbox{ is $p$-special};\cr
0, & \hbox{otherwise}.
\end{cases}
\end{align*}
where  $a-m(l-1)=lu$ and $b-(m-1)(l-1)=lv$.

\end{lemma}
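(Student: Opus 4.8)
The plan is to peel the staircase $\sigma_m=(l-1)\delta_m$ off one row at a time, reducing $\dim L(\sigma_m+l\mu)^{\beta_m}$ (I abbreviate the weight by $\beta_m=(a,b,(m-2)(l-1),\ldots,2(l-1),(l-1))$) down to a two‑row multiplicity that the first Lemma already computes. The engine is that twisting by the determinant is transparent: since $D_m=L(\omega_m)$ is one dimensional of weight $\omega_m=(1,\ldots,1)$ and $(\sigma_m+l\mu)_m=(l-1)+l\mu_m\geq l-1$, one has $L(\sigma_m+l\mu)\cong D_m^{\otimes(l-1)}\otimes L(\sigma_m+l\mu-(l-1)\omega_m)$, so that—just as in the Remark, but shifting by $(l-1)\omega_m$ rather than $\omega_m$—$\dim L(\lambda)^\alpha=\dim L(\lambda-(l-1)\omega_m)^{\alpha-(l-1)\omega_m}$ whenever $\lambda_m\geq l-1$. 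Throughout I would use $\mu_{m+1}=0$ together with the truncation functor to pass freely between $G(n)$ and $G(m)$, and feed the base case into the first Lemma.

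For (i) the decisive arithmetic is that $\sigma_m-(l-1)\omega_m=(\sigma_{m-1},0)$ and $\beta_m-(l-1)\omega_m=(\beta_{m-1},0)$ ($\sigma_{m-1}$ and $\beta_{m-1}$ padded by a final $0$), where $\beta_{m-1}=(a-(l-1),b-(l-1),(m-3)(l-1),\ldots,(l-1))$. The twist thus gives $\dim L(\sigma_m+l\mu)^{\beta_m}=\dim L_m((\sigma_{m-1},0)+l\mu)^{(\beta_{m-1},0)}$. I would then inspect the $m$-th coordinates: the highest weight has $m$-th entry $l\mu_m$, while the weight has $m$-th entry $0$. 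If $\mu_m>0$ the highest weight genuinely has length $m$ whereas $(\beta_{m-1},0)$ has length at most $m-1$, so by the standard fact (the paragraph before the Remark) that every weight of $L(\lambda)$ is $\leq\lambda$, and hence no shorter, the multiplicity is $0$; this is the top‑row vanishing. If $\mu_m=0$ both weights are supported on the first $m-1$ coordinates and truncation identifies the dimension with $\dim L(\sigma_{m-1}+l\mu)^{\beta_{m-1}}$, the asserted equality. Iterating this one step from $\sigma_m$ down through $\sigma_3$ (each application legal since the running index stays $>2$) kills $\mu_i$ for every $i$ from $3$ to $m$, giving the full statement that the multiplicity vanishes unless $\mu_i=0$ for all $i>2$.

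Part (ii) is then the iteration: $m-2$ applications of (i) carry $\sigma_m$ to $\sigma_2$, subtracting $(l-1)$ from each of the first two coordinates at every step, and leave $\dim L(\sigma_2+l\mu)^{(a-(m-2)(l-1),\,b-(m-2)(l-1))}$ (if some $\mu_i\neq0$ with $i>2$ both sides are $0$ and there is nothing to prove). One further determinant twist inside $G(2)$, using $\sigma_2-(l-1)\omega_2=\sigma_1$, produces $\dim L(\sigma_1+l\mu)^{(a-(m-1)(l-1),\,b-(m-1)(l-1))}$; here no length drops, because $\mu=(c,d)$ really occupies two rows, so this last step needs only the determinant identity, not the vanishing argument.

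Finally, for (iii) I would insert (ii) into the first Lemma, applied to the two‑row module $L(\sigma_1+l\mu)=L(l-1+lc,\,ld)$ at the weight $(a-(m-1)(l-1),\,b-(m-1)(l-1))$. The relevant differences are $c'-d'=(l-1)+l(c-d)$ and $a'-b'=a-b=(l-1)+l(u-v)$, so the first Lemma returns $1$ exactly when $\big((l-1)+l(c-d),\,(l-1)+l(u-v)\big)$ is $(l,p)$-special. Unwinding that definition with $s_0=l-1$, $\bar s=c-d$, the low part $a_0$ must satisfy $a_0\equiv l-1\ \mod\ l$ and $-(l-1)\leq a_0\leq l-1$, leaving $a_0=l-1$ (demanding that $(c-d,u-v)$ be $p$-special) and, only when $l$ is even, the competing value $a_0=-1$ (demanding that $(c-d,u-v+1)$ be $p$-special). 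The crux I expect is ruling out this second branch: a weight multiplicity is nonzero only in the matching degree, which at the reduced level forces $u+v=c+d$, whence $(c-d)-(u-v)=2(v-d)$ is even; since every $p$-special pair has even difference, the $a_0=-1$ branch is incompatible and drops out, leaving precisely the condition that $(c-d,u-v)$ be $p$-special (the hypotheses $a+m\equiv0$, $b+m\equiv1\ \mod\ l$ enter only to make $u,v$ integral). The genuine labour is thus this final reconciliation of the $(l,p)$-special and $p$-special conditions against the degree constraint; everything preceding it is bookkeeping with determinant twists, truncation, and the length inequality for weights.
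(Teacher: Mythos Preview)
Your proof is correct and follows essentially the same strategy as the paper: determinant twists (Remark~3.4) and the length inequality for parts (i) and (ii), then a reduction to a two-row computation for (iii).

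The only substantive difference is in (iii). The paper applies Steinberg's tensor product theorem directly, writing $L(\sigma_1+l\mu)=L(\sigma_1)\otimes \dotL(\mu)^F$; the congruence $a-(m-1)(l-1)\equiv l-1\ \mod\ l$ then forces the $L(\sigma_1)$-weight to be $(l-1,0)$, leaving precisely $\dim\dotL(c,d)^{(u,v)}$, which Lemma~3.3 in the classical case converts to the $p$-special condition in one stroke. Your route---apply Lemma~3.3 in the quantum case and then unpack the $(l,p)$-special definition---is equally valid, but it manufactures the spurious branch $a_0=-1$ (for $l$ even) that you must then kill via the degree constraint $c+d=u+v$ and the parity of $(c-d)-(u-v)$. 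That elimination is sound, so the conclusions agree; the paper's organisation simply avoids this extra bookkeeping by peeling off the $L(\sigma_1)$ factor before invoking the two-row lemma.
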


\begin{proof}   (i) If $\mu_{m+1}=0$ then  $\sigma_m+l\mu$ and

 $(a,b,(m-2)(l-1),\ldots,2(l-1),(l-1))$ are partitions of length $m$ with final term at least $l-1$. Hence, from Remark 3.4,  we obtain 
\begin{align*}
 \dim L&(\sigma_m+l\mu)^{(a,b,(m-2)(l-1),\ldots,2(l-1),(l-1))}\cr
 &=\dim L(\sigma_m+l\mu-(l-1)\omega_m)^{(a,b,(m-2)(l-1),\ldots,2(l-1),(l-1))-(l-1)\omega_m}\cr
 &=\dim  L(\sigma_{m-1}+l\mu)^{(a-(l-1),b-(l-1),(m-3)(l-1),\ldots,2(l-1),(l-1))}.
\end{align*}

This proves the first assertion. 

Now if  $L(\sigma_m+l\mu)^{(a,b,(m-2)(l-1),\ldots,2(l-1),(l-1))}\neq 0$ then 
$$\sigma_m+l\mu\geq (a,b,(m-2)(l-1),\ldots,2(l-1),(l-1))$$
which gives $\mu_{m+1}=0$. The second point follows now by induction on $m$.

(ii) This follows by  repeated application of (i).

(iii) We have $L(\sigma_1+l\mu)=L(\sigma_1)\otimes \dotL(\mu)^{F}$. We consider $L(\sigma_1+l\mu)$ as a $G(2)$-module. A weight of $L(\sigma_1)\otimes \dotL(\mu)^{F}$ has the form  $\alpha+l\beta$, where 
$$\alpha\in \{(l-1,0),(l-2,1),\ldots,(0,l-1)\}$$
 and $\beta$ is a weight of $\dotL(\mu)$  and all non-zero weight spaces are one dimensional.  Since $a-(m-1)(l-1)$ is congruent to $l-1$ (modulo $l$) the only solution for $\alpha$ is $(l-1,0)$ and the  dimension of the weight space 
 $$L(\sigma_1+l\mu)^{(a-(m-1)(l-1),b-(m-1)(l-1))}$$
  is equal to the number of weights $\beta$ of $\dotL(\mu)$ such that 
 $$(l-1,0)+l\beta=(a-(m-1)(l-1),b-(m-1)(l-1))$$
 so this dimension is $1$ if  $(a-m(l-1),b-(m-1)(l-1))$ is a weight of $\dotL(\mu)^{F}$, i.e., if $(u,v)$ is a weight of $\dotL(\mu)$, and $0$ otherwise.   Now the result follows from Lemma 3.3 (applied in the classical case $q=1$).

\end{proof}

\section{Decompositions of some polynomial modules}

\q The main technical point of this paper is an analysis, for $l=2$,  of a certain block component of a  module  of the form 
$$S^aE\otimes \tbw^bE   \otimes   \tbw^{m-2}E\otimes \cdots \otimes \tbw^2E\otimes E.$$

\q Suppose that $n\geq r$. Then, for $\alpha\in  \Lambda(n,r)$, the module $S^\alpha E$ is injective and we have 

$$S^\alpha E=\bigoplus_{\lambda\in \Lambda^+(n,r)} I(\lambda)^{({d_{\lambda\alpha}})}\eqno{(1)}$$
where $d_{\lambda\alpha}=\dim L(\lambda)^\alpha$, \cite[Section 2.1 (8)]{q_Schur}.

\q  Applying the Schur functor to (1), for any $\alpha\in \Lambda(n,r)$,  we get 
$$M(\alpha)=\bigoplus_{\lambda\in \Lambda^+(n,r)} Y(\lambda)^{(d_{\lambda\alpha})}\eqno{(2)}$$

 \q From Lemma 3.5 and equations (1) and (2) above we get the following two corollaries.

\begin{corollary} For $m\geq 2$, let $a\geq m(l-1)$ and $b\geq (m-1)(l-1)$.   Suppose that $a+m\equiv 0$ and $b+m\equiv 1\ \mod\ l$.    The  component of 
$$S^a E\otimes S^b E \otimes S^{(m-2)(l-1)} E\otimes\cdots \otimes S^{2(l-1)} E \otimes S^{l-1}E$$
 corresponding to the core $\sigma_m$ is 
$$\bigoplus_{\mu} I(\sigma_m+l\mu)$$
where the sum is over all partitions $\mu=(c,d)$ such that, $c+d=u+v$ and $(c-d,u-v)$ is $p$-special, where $a-m(l-1)=lu$ and $b-(m-1)(l-1)=lv$.
\end{corollary}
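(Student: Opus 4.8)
The plan is to recognise the given module as $S^\alpha E$ for the composition
$\alpha=(a,b,(m-2)(l-1),\ldots,2(l-1),(l-1))\in\Lambda(n,r)$ (taking $n\ge r$), and to read off its $\sigma_m$-component from the injective decomposition (1). By (1) we have $S^\alpha E=\bigoplus_{\lambda\in\Lambda^+(n,r)}I(\lambda)^{(d_{\lambda\alpha})}$ with $d_{\lambda\alpha}=\dim L(\lambda)^\alpha$, and since each indecomposable injective $I(\lambda)$ lies in the block determined by the $l$-core of $\lambda$, the $\sigma_m$-component is precisely $\bigoplus_\lambda I(\lambda)^{(d_{\lambda\alpha})}$, the sum now taken over those $\lambda$ whose $l$-core is $\sigma_m$ and for which $d_{\lambda\alpha}\neq0$. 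Thus the problem reduces to describing exactly these $\lambda$ together with their multiplicities $d_{\lambda\alpha}$.

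First I would pin down the shape of such a $\lambda$. By Remark 3.4, $\dim L(\lambda)^\alpha\neq0$ forces $\alpha\leq\lambda$, so the length of $\lambda$ is at most the length of $\alpha$, which is $m$ (all parts of $\alpha$ are positive since $l\geq2$); on the other hand removing skew $l$-hooks can only decrease the number of rows, so the core $\sigma_m$, which has length $m$, forces the length of $\lambda$ to be at least $m$. Hence $\lambda$ has length exactly $m$. The key step is then the combinatorial identification of the length-$m$ partitions with $l$-core $\sigma_m$: passing to first-column hook lengths on an $m$-bead abacus, the $\beta$-numbers of $\sigma_m=(m(l-1),\ldots,(l-1))$ are $\{kl-1:1\leq k\leq m\}$, all lying on the single runner of residue $l-1$. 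Two partitions share an $l$-core iff their abaci carry the same number of beads on each runner, so any length-$m$ partition $\lambda$ with core $\sigma_m$ must have all $m$ beads on the runner $l-1$; writing these $\beta$-numbers as $\{ln_i-1\}$ with $n_1>\cdots>n_m\geq1$ and setting $\mu_i=n_i-(m+1-i)$ shows that $\mu=(\mu_1,\ldots,\mu_m)$ is a partition and $\lambda=\sigma_m+l\mu$, while conversely each $\sigma_m+l\mu$ has core $\sigma_m$. I expect this $\beta$-number/abacus bookkeeping to be the main obstacle, since it is the one place where the very specific shape of $\sigma_m$ is exploited.

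It then remains to feed this family into Lemma 3.5. Part (i) (or directly the length bound when $m=2$) gives $L(\sigma_m+l\mu)^\alpha=0$ unless $\mu_i=0$ for $i>2$, so only $\mu=(c,d)$ survive; part (iii), whose hypotheses $a+m\equiv0$ and $b+m\equiv1\ \mod\ l$ are exactly those assumed here, evaluates the surviving multiplicity $d_{(\sigma_m+l\mu)\alpha}$ as $1$ when $(c-d,u-v)$ is $p$-special and $0$ otherwise, where $a-m(l-1)=lu$ and $b-(m-1)(l-1)=lv$. Finally, matching degrees $\deg(\sigma_m+l\mu)=\deg\alpha=r$ reduces, using $a=lu+m(l-1)$ and $b=lv+(m-1)(l-1)$, to the single linear condition $c+d=u+v$. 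Collecting these facts, the $\sigma_m$-component equals $\bigoplus_\mu I(\sigma_m+l\mu)$, the sum over partitions $\mu=(c,d)$ with $c+d=u+v$ and $(c-d,u-v)$ $p$-special, each occurring exactly once, which is the assertion.
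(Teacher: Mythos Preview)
Your proposal is correct and follows the same approach as the paper, which simply says the corollary follows from Lemma~3.5 together with the injective decomposition~(1). You have filled in the details the paper leaves implicit: the length bound from $\alpha\leq\lambda$ and the $l$-core giving $\l(\lambda)=m$, and the abacus identification of length-$m$ partitions with core $\sigma_m$ as exactly the $\sigma_m+l\mu$, before invoking Lemma~3.5.
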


 \q Applying the Schur functor we then obtain the following.

\begin{corollary}  For $m\geq 2$, let $a\geq m(l-1)$ and $b\geq (m-1)(l-1)$.   Suppose that $a+m\equiv 0$ and $b+m\equiv 1 \ \mod\ l$.    The  block component 
$$M(a,b,(m-2)(l-1),\ldots,2(l-1),l-1)$$
 corresponding to the core $\sigma_m$ is 
$$\bigoplus_{\mu} Y(\sigma_m+l\mu)$$
where the sum is over all partitions $\mu=(c,d)$ such that, $c+d=u+v$ and $(c-d,u-v)$ is $p$-special, where $a-m(l-1)=lu$ and $b-(m-1)(l-1)=lv$.

\end{corollary}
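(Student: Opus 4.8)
The plan is to deduce this statement from Corollary 4.1 by a single application of the Schur functor $f$. First I would observe that the polynomial module appearing in Corollary 4.1 is precisely $S^\alpha E$ for the composition $\alpha=(a,b,(m-2)(l-1),\ldots,2(l-1),l-1)\in\Lambda(n,r)$, where $r=a+b+(l-1)\binom{m-1}{2}$; choosing $n\geq r$ puts us in the range where the Schur functor $f\colon\mod(S(n,r))\to\mod(H(r))$ enjoys the listed properties. By the property $f(S^\alpha E)=M(\alpha)$ recorded at the end of the Hecke-algebra subsection, the image of $S^\alpha E$ is exactly the permutation module $M(a,b,(m-2)(l-1),\ldots,2(l-1),l-1)$ named in the statement.

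Next I would apply $f$ to the direct sum decomposition of the core-$\sigma_m$ component of $S^\alpha E$ provided by Corollary 4.1. Since $f$ is additive and exact, and since $f(I(\sigma_m+l\mu))=Y(\sigma_m+l\mu)$, the image of that component is $\bigoplus_\mu Y(\sigma_m+l\mu)$, the sum being over the same index set of partitions $\mu=(c,d)$ with $c+d=u+v$ and $(c-d,u-v)$ $p$-special. As each Young module $Y(\sigma_m+l\mu)$ is nonzero, no summand collapses under $f$.

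The one point needing genuine justification, and the step I expect to be the main obstacle, is that $f$ carries the core-$\sigma_m$ component of $S^\alpha E$ onto exactly the core-$\sigma_m$ block component of $M(\alpha)$, rather than into some larger or smaller summand. To handle this I would use that both $\mod(S(n,r))$ and $\mod(H(r))$ decompose into blocks indexed by $l$-cores of partitions of $r$ (for the Schur side via the core description recalled before Cox's theorem, for the Hecke side via the Nakayama-type block classification), and that $f$ is compatible with this common indexing: a composition factor $L(\lambda)$ lying in the core-$\gamma$ block of $S(n,r)$ is sent by $f$ to $f(L(\lambda))$, which is either zero or a simple $H(r)$-module lying in the core-$\gamma$ block. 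Consequently, writing $S^\alpha E=\bigoplus_\gamma (S^\alpha E)_\gamma$ for its decomposition into core components and $M(\alpha)=\bigoplus_\gamma M(\alpha)_\gamma$ for the block decomposition over $H(r)$, exactness of $f$ gives $f((S^\alpha E)_\gamma)\subseteq M(\alpha)_\gamma$ for each $l$-core $\gamma$.

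Summing over $\gamma$ recovers all of $M(\alpha)=f(S^\alpha E)$ on both sides, so each of these inclusions must in fact be an equality. Taking $\gamma=\sigma_m$ then identifies $f\big((S^\alpha E)_{\sigma_m}\big)=\bigoplus_\mu Y(\sigma_m+l\mu)$ with the block component $M(a,b,(m-2)(l-1),\ldots,2(l-1),l-1)_{\sigma_m}$, which is exactly the asserted decomposition. The rest of the work is bookkeeping: confirming that $\alpha$ has the stated entries, that $r$ and $n$ are as above, and that the index set $\{\mu=(c,d): c+d=u+v,\ (c-d,u-v)\ p\text{-special}\}$ is transported unchanged by $f$.
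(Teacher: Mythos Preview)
Your proposal is correct and follows exactly the paper's approach: the paper's entire proof is the phrase ``Applying the Schur functor we then obtain the following,'' deducing the statement from Corollary~4.1 via $f(S^\alpha E)=M(\alpha)$ and $f(I(\lambda))=Y(\lambda)$. Your additional justification that $f$ respects the block decomposition by $l$-cores is a reasonable elaboration of a point the paper leaves implicit.
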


\section{Adapted  partitions and symmetric polynomials}

\q For $\lambda\in \Lambda^+(n)$ we write $s(\lambda)$ for the Schur symmetric function corresponding to $\lambda$. The elements $s(\lambda)$ of $\zed X(n)$, as $\lambda$ varies over partitions with at most $n$ parts, form a $\zed$-basis of the ring of symmetric functions $\zed[x_1,\ldots,x_n]^{\Sigma_n}$ .

\q Let $\gamma\in \Lambda^+(n)$ be an $l$-core. For a polynomial $G(n)$-module $V$ we write $V(\gamma)$ for the component of $V$ corresponding to $\gamma$.   For $\lambda\in \Lambda^+(n)$ we define  $C^*_\gamma(s(\lambda))$ to be $s(\lambda)$ if the core of $\lambda$ is $\gamma$ and $0$ otherwise. We extend $C^*_\gamma$ additively to an endomorphism of the ring of symmetric functions $\zed[x_1,\ldots,x_n]^{\Sigma_n}$.

\begin{lemma} Let $\gamma\in \Lambda^+(n)$ be an $l$-core. For a finite dimensional polynomial module $V$ we have
$$\ch V(\gamma)=C^*_\gamma(\ch V).$$

\end{lemma}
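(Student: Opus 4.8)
The plan is to reduce the statement to the fact, recalled in the excerpt, that the Schur functions $s(\lambda)$ form a $\zed$-basis of the ring of symmetric functions, so that $\ch V$ has a unique expansion in this basis, and then to match that expansion with the block decomposition of $V$ core by core.

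First I would use that $V$ is finite dimensional and polynomial to write $V=\bigoplus_{\delta} V(\delta)$, the sum running over the finitely many $l$-cores $\delta$ occurring in $V$. This is legitimate because each block of the ($q$-)Schur algebra is contained in a single core-class: by the block fact recalled above, $L(\lambda)$ and $L(\mu)$ lie in different blocks whenever $[\lambda]$ and $[\mu]$ have different $l$-cores, so every block has a well-defined common core and the grouping of block components by core is unambiguous. Since $\ch$ is additive on direct sums, $\ch V=\sum_{\delta}\ch V(\delta)$, and every composition factor of $V(\delta)$ is of the form $L(\lambda)$ with $\core(\lambda)=\delta$.

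The main point is then to show that $\ch V(\delta)$ lies in the $\zed$-span of $\{s(\lambda):\core(\lambda)=\delta\}$. Since $\ch$ depends only on the class of a module in the Grothendieck group, it suffices to express each $[L(\lambda)]$ with $\core(\lambda)=\delta$ as a $\zed$-combination of the $[\nabla(\mu)]$ with $\core(\mu)=\delta$. This follows from highest weight theory together with the block fact: the decomposition numbers $[\nabla(\mu):L(\lambda)]$ vanish unless $\mu$ and $\lambda$ share the same $l$-core, and within a fixed block the matrix relating the $[\nabla(\mu)]$ to the $[L(\lambda)]$ is unitriangular with respect to the dominance order (as $\nabla(\mu)$ has simple socle $L(\mu)$, highest weight $\mu$ of multiplicity one, and all other composition factors $L(\lambda)$ with $\lambda<\mu$), hence invertible over $\zed$. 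Using $\ch\nabla(\mu)=s(\mu)$, this gives $\ch V(\delta)=\sum_{\core(\mu)=\delta}c_\mu\, s(\mu)$ for suitable integers $c_\mu$.

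Finally I would assemble the pieces. Summing over $\delta$, and noting that each partition belongs to exactly one core-class, yields $\ch V=\sum_{\delta}\ch V(\delta)=\sum_{\lambda}c_\lambda\, s(\lambda)$; by the uniqueness of the Schur expansion these $c_\lambda$ are precisely the coordinates of $\ch V$ in the basis $\{s(\lambda)\}$. Applying $C^*_\gamma$, which by definition retains $s(\lambda)$ exactly when $\core(\lambda)=\gamma$ and kills it otherwise, gives $C^*_\gamma(\ch V)=\sum_{\core(\lambda)=\gamma}c_\lambda\, s(\lambda)=\ch V(\gamma)$, as required. I expect the genuine work to be concentrated in the middle step: verifying that $\ch V(\delta)$ is supported only on Schur functions indexed by partitions of core exactly $\delta$. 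This is where the core-based block decomposition and the unitriangular passage between the standard modules $\nabla(\mu)$ and the simples $L(\lambda)$ must be combined; once that is in place, the comparison of coefficients is purely formal.
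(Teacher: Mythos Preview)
Your proof is correct. The paper takes a shorter route: it observes that both $V\mapsto \ch V(\gamma)$ and $V\mapsto C^*_\gamma(\ch V)$ are additive on short exact sequences of polynomial $G(n)$-modules, hence factor through the Grothendieck group; since the classes $[\nabla(\lambda)]$ generate this group it suffices to verify the identity for $V=\nabla(\lambda)$, where it is immediate from the definition of $C^*_\gamma$ (as $\nabla(\lambda)$ lies entirely in the component for the core of $\lambda$ and has character $s(\lambda)$). Your argument is the same at heart---both amount to a Grothendieck-group computation---but you unpack it explicitly via the block decomposition and the unitriangular change of basis between simples and standards. The paper's version bypasses that intermediate step by checking directly on a convenient generating set; your version has the minor advantage of making visible that $\ch V(\delta)$ is supported only on Schur functions of core $\delta$, but this is more work than the lemma actually requires.
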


\begin{proof} Since both sides are additive on short exact sequences of $G(n)$-modules, it is enough to check for a set of polynomial modules that generate the Grothendieck group of finite dimensional polynomial 
$G(n)$-modules. Hence it is enough to check for $V=\nabla(\lambda)$, $\lambda\in \Lambda^+(n)$, and for these modules it is clear from the definition.

\end{proof}

\q From now on we restrict attention to the case $l=2$. We need to keep  track of the part of a symmetric polynomial  corresponding to the core $\sigma_m=(m,m-1,.\ldots,1)$, $m\geq 0$ (where $\sigma_0=0$). To this end we introduce the following notion.

\begin{definition}  Let $m$ be a non-negative integer and $\lambda=(\lambda_1,\lambda_2,\ldots)$ be a partition. We say that $\lambda$ is $m$-adapted if $\lambda_i> m-i$, for all $i\geq 1$ with $\lambda_i>0$.
\end{definition}

\q We write $C_m$  for the additive endomorphism of the ring of symmetric functions in $n$ variables such that
$$C_m(s(\lambda))=\begin{cases} s(\lambda), & \hbox{ if }  \lambda \hbox{ is $m$-adapted};\cr
0, & \hbox{ otherwise.}
\end{cases}$$

\begin{lemma}   For a symmetric polynomial  $g$ (in $n$ variables) and $0\leq r\leq n$  we have
$$C_{m+1}(gs(1^r))=C_{m+1}(C_m(g)s(1^r)).$$
\end{lemma}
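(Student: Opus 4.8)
The statement asserts that the endomorphisms $C_{m+1}$ and $C_m$ of the ring of symmetric functions interact compatibly through multiplication by the elementary symmetric function $s(1^r)$, namely $C_{m+1}(g\,s(1^r)) = C_{m+1}(C_m(g)\,s(1^r))$. Since both sides are additive in $g$, the plan is to reduce to the case $g = s(\lambda)$ for a single partition $\lambda$ with at most $n$ parts, using that the $s(\lambda)$ form a $\zed$-basis of the symmetric functions. Equivalently, subtracting, it suffices to show that whenever $\lambda$ is \emph{not} $m$-adapted (so $C_m(s(\lambda)) = 0$), the product $s(\lambda)s(1^r)$, after applying $C_{m+1}$, vanishes; that is, no $(m+1)$-adapted partition appears in the Pieri expansion of $s(\lambda)s(1^r)$.

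The key tool is the Pieri rule: $s(\lambda)s(1^r) = \sum_\nu s(\nu)$, where the sum runs over partitions $\nu \supseteq \lambda$ with $\nu/\lambda$ a vertical strip of size $r$ (at most one box added in each row). So I would fix $\lambda$ not $m$-adapted and a partition $\nu$ obtained from $\lambda$ by adding a vertical $r$-strip, and show that $\nu$ cannot be $(m+1)$-adapted. Unwinding Definition 5.4: $\lambda$ not $m$-adapted means there is some index $i \geq 1$ with $\lambda_i > 0$ and $\lambda_i \leq m - i$, i.e. $\lambda_i + i \leq m$. I want to produce an index $j$ witnessing that $\nu$ is not $(m+1)$-adapted, i.e. some $j$ with $\nu_j > 0$ and $\nu_j + j \leq m+1$. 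Because $\nu/\lambda$ is a vertical strip, each row grows by at most one box, so $\nu_i \leq \lambda_i + 1$; combined with $\lambda_i + i \leq m$ this gives $\nu_i + i \leq m+1$, and since $\nu_i \geq \lambda_i > 0$ we conclude $\nu_i > 0$. Thus $j = i$ witnesses the failure of $(m+1)$-adaptedness, and $C_{m+1}(s(\nu)) = 0$ for every such $\nu$, giving $C_{m+1}(s(\lambda)s(1^r)) = 0$ as required.

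The main obstacle, and the only point needing genuine care, is the bookkeeping in the inequality $\nu_i \leq \lambda_i + 1$ and its interaction with the shifted condition $\lambda_i + i \leq m$ versus $\nu_i + i \leq m+1$: the whole argument hinges on the vertical-strip constraint that each row increases by at most one, which is exactly what makes the $m \rightsquigarrow m+1$ shift match the single-box growth. One should double-check that the witnessing index $i$ can be chosen with $\lambda_i > 0$ (which is part of the non-adaptedness hypothesis), so that $\nu_i > 0$ is automatic, and that no edge case (such as $\lambda = 0$, which is vacuously $m$-adapted for every $m$, or $r = 0$, where $s(1^0) = 1$ and the identity is trivial) disrupts the reduction. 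Assembling these observations, the identity follows by applying $C_{m+1}$ term-by-term to the Pieri expansion and noting that the $(m+1)$-adapted terms surviving on the left are precisely those arising from $(m+1)$-adapted $\nu$ built on $m$-adapted $\lambda$, which is exactly what the right-hand side computes.
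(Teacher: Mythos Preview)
Your proposal is correct and follows essentially the same argument as the paper: reduce by linearity to $g=s(\lambda)$, apply Pieri's formula for $s(\lambda)s(1^r)$, and observe that if $\lambda$ is not $m$-adapted (witnessed by some $i$ with $\lambda_i>0$ and $\lambda_i\leq m-i$) then each $\nu$ in the expansion satisfies $\nu_i\leq\lambda_i+1\leq (m+1)-i$ with $\nu_i\geq\lambda_i>0$, so $\nu$ is not $(m+1)$-adapted. You are in fact slightly more explicit than the paper in verifying the positivity condition $\nu_i>0$, but the approach is identical.
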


\begin{proof} It is enough to check this for $g=s(\lambda)$ for a partition $\lambda$, with at most $n$ parts.  By Pieri's Formula, \cite[5.17]{Mac},  we have 
$$s(\lambda) s(1^r)=\sum_\mu s(\mu)$$
where the sum is over all partitions $\mu$ with at most $n$ parts, whose Young diagram may be obtained from the Young diagram of $\lambda$ by adding a box in each of $r$ distinct rows.
 Hence we have $\mu_i\leq \lambda_i+1$ for each $\mu$ appearing in the above sum. 

\q If $\lambda$ is not $m$ adapted then $C_m(s(\lambda))=0$. Moreover, in this case, we have  $\lambda_i\leq m-i$ for some $i$, and so, for $\mu$ appearing in the above sum we have $\mu_i\leq \lambda_i+1\leq (m+1)-i$. Hence, $\mu$ is not $(m+1)$-adapted and $C_{m+1}(s(\mu))=0$. Therefore we get  $C_{m+1}(s(\lambda) s(1^r))=0$.

\q Suppose now that $\lambda$ is $m$-adapted. Thus we have 
$$C_{m+1}(C_m(s(\lambda))s(1^r))=C_{m+1}(s(\lambda) s(1^r))$$
 and we are done.

\end{proof}

\begin{lemma}   Let $m\geq 2$, $a\geq m$ and $b\geq m-1$. Then we have, for all $n$ sufficiently large, 
\begin{align*}C_m&(s(a)s(1^b) s(1^{m-2})\ldots s(1))\cr&
=
s(a+1,m-1,\ldots,2,1^{b-m+1})+s(a,m-1,\ldots,2,1^{b-m+2}).
\end{align*}
\end{lemma}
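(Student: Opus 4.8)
The plan is to induct on $m$, using the previous lemma to strip off one factor of the product and Pieri's formula to carry out the resulting computation. Throughout I take $n$ as large as needed, so that every partition appearing has at most $n$ parts and the identity is really an identity in the ring of symmetric functions. Write $g_m = s(a)s(1^b)s(1^{m-2})\cdots s(1)$ for the product in question.

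For the base case $m=2$ the trailing product $s(1^{m-2})\cdots s(1)$ is empty, so $g_2 = s(a)s(1^b)$. I would expand this by Pieri's formula \cite[5.17]{Mac}: adjoining a vertical $b$-strip to the single row $(a)$ produces exactly the two partitions $(a+1,1^{b-1})$ and $(a,1^b)$. Both are $2$-adapted (since $a\geq 2$ forces the first part to be at least $2$), so $C_2$ fixes each and the claimed formula holds.

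For $m\geq 3$ I would observe that $g_m = g_{m-1}\,s(1^{m-2})$, where $g_{m-1}=s(a)s(1^b)s(1^{m-3})\cdots s(1)$ is exactly the product of the lemma with $m$ replaced by $m-1$. Applying the previous lemma (the identity $C_{m+1}(g\,s(1^r))=C_{m+1}(C_m(g)\,s(1^r))$ with its index lowered by one) gives $C_m(g_m)=C_m\big(C_{m-1}(g_{m-1})\,s(1^{m-2})\big)$. Since $a\geq m>m-1$ and $b\geq m-1>m-2$, the induction hypothesis applies to $g_{m-1}$ and yields $C_{m-1}(g_{m-1})=s(a+1,m-2,\ldots,2,1^{b-m+2})+s(a,m-2,\ldots,2,1^{b-m+3})$. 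It then remains to compute $C_m$ of each of these two Schur functions multiplied by $s(1^{m-2})$: by Pieri this multiplication adds one box to each of $m-2$ distinct rows, and for the outcome $\mu$ to be $m$-adapted one needs $\mu_i\geq m-i+1$ for $1\leq i\leq m-1$. The crux is a deficit count: in each input partition the first row already meets this staircase bound (using $a\geq m$), while rows $2,\ldots,m-1$ are each short by exactly one box; as there are precisely $m-2$ deficient rows and precisely $m-2$ boxes to distribute, the unique $m$-adapted Pieri term adds one box to each of rows $2,\ldots,m-1$. This sends $s(a+1,m-2,\ldots,2,1^{b-m+2})$ to $s(a+1,m-1,\ldots,2,1^{b-m+1})$ and $s(a,m-2,\ldots,2,1^{b-m+3})$ to $s(a,m-1,\ldots,2,1^{b-m+2})$, which is exactly the asserted sum.

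The main obstacle is this last uniqueness step: I must rule out every competing vertical strip — in particular any that places a box on row $1$ or on a trailing row of $1$'s — and confirm that the surviving shape is genuinely a partition with $\mu_1\geq\mu_2$. Both points reduce to the hypothesis $a\geq m$, which guarantees row $1$ neither needs nor can absorb a box without starving a deficient row, and which keeps the final shape dominant. Careful bookkeeping of the two index families ($1^{b-m+2}$ versus $1^{b-m+1}$, and the first part $a+1$ versus $a$) is the only other thing to watch, and the two resulting terms are manifestly distinct, so no cancellation occurs.
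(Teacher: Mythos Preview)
Your proof is correct and follows essentially the same route as the paper: induction on $m$, the base case via Pieri's formula, and the inductive step by invoking Lemma~5.3 to replace $g_{m-1}$ by $C_{m-1}(g_{m-1})$ and then observing that the only $m$-adapted term in each Pieri expansion comes from adding boxes to rows $2,\ldots,m-1$. Your deficit-count phrasing makes explicit exactly the reasoning the paper leaves implicit in the sentence ``such a diagram is not $m$-adapted unless the boxes are added to rows $2$ up to $m-1$''.
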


\begin{proof} We argue by induction on $m$.  First suppose $m=2$. Then 
$$s(a)s(1^b)=s(a+1,1^{b-1})+s(a,1^b)$$

 by Pieri's Formula. Both $(a+1,1^{b-1})$ and $(a,1^b)$ are $2$-adapted, so the result holds in this case.
 
 \q Now suppose that $m\geq 3$ and the result holds for $m-1$. By Lemma 5.3 and the induction hypothesis we have 
 \begin{align*}C_m((s(a)&s(1^b)s(1^{m-3})\ldots s(1))s(1^{m-2}))\cr
 &=C_m(C_{m-1}(s(a)s(1^b)s(1^{m-3})\ldots s(1)) s(1^{m-2}))\cr
 &=C_m(s(a+1,m-2,\ldots,2,1^{b-m+2})s(1^{m-2}))\cr
 &\phantom{hello}+C_m(s(a,m-2,\ldots,2,1^{b-m+3})s(1^{m-2})).
\end{align*}

\q But now, again by Pieri's Formula, for $\lambda=(a+1,m-2,\ldots,2,1^{b-m+2})$ we have 
$$s(\lambda)s(1^{m-2})=\sum_\mu s(\mu)$$
where the sum is over all partitions whose diagram is obtained by adding a box to $m-2$ rows of the Young diagram of $\lambda$.  But such a diagram is not $m$-adapted unless the boxes are added to rows $2$ up to $m-1$ and in this case we have $\mu=(a+1,m-1,\ldots,2,1^{b-m+1})$.  Hence we obtain 
$$C_m(s(a+1,m-2,\ldots,2,1^{b-m+2})s(1^{m-2}))=s(a+1,m-1,\ldots,2,1^{b-m+1})$$ 
and similarly
$$C_m(s(a,m-2,\ldots,2,1^{b-m+3})s(1^{m-2}))=s(a,m-1,\ldots,2,1^{b-m+2}),$$
so we are done.

\end{proof}

\q  We write  $C^*_m$ for $C^*_{\sigma_m}$.   We note that for a symmetric polynomial $g$  contributions to $C^*_m(g)$ can only come from Schur polynomials $s(\lambda)$ with $\lambda$ an $m$-adapted partition. So  we have $C^*_m(g)=C^*_m(C_m(g))$, for a symmetric polynomial $g$. Using Lemma 5.4 it is now easy to verify the following.

\begin{corollary}

Let $m\geq 2$, $a\geq m$ and $b\geq m-1$. Then we have, for all $n$ sufficiently large,  
\begin{align*}&C_m^*(s(a)s(1^b)s(1^{m-2})\ldots s(1))\cr&
=\begin{cases} s(a+1,m-1,\ldots,2,1^{b-m+1}), & \hbox{ if }     a-m    \hbox{ is odd and} \     b-m \hbox{ is  even};\cr
s(a,m-1,\ldots,2,1^{b-m+2}), &  \hbox{ if }     a-m    \hbox{ is even }     b-m \hbox{ is  odd};\cr
0, & \hbox{otherwise.}
\end{cases}
\end{align*}
\end{corollary}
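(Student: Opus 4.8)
The plan is to feed the explicit evaluation of $C_m$ from Lemma 5.4 into the identity $C^*_m(g)=C^*_m(C_m(g))$ recorded just above the corollary, and then to decide, separately for each resulting Schur function, whether the $2$-core of its indexing partition equals $\sigma_m$. Writing $g=s(a)s(1^b)s(1^{m-2})\cdots s(1)$, Lemma 5.4 gives
\[C_m(g)=s(\lambda)+s(\mu),\quad \lambda=(a+1,m-1,\ldots,2,1^{b-m+1}),\ \ \mu=(a,m-1,\ldots,2,1^{b-m+2}),\]
so that $C^*_m(g)=C^*_m(s(\lambda))+C^*_m(s(\mu))$. Since $C^*_m=C^*_{\sigma_m}$ returns $s(\nu)$ when the $2$-core of $\nu$ is $\sigma_m$ and $0$ otherwise, and since the $2$-core does not depend on the ambient number of variables, the whole statement reduces to identifying the $2$-cores of the two explicit partitions $\lambda$ and $\mu$.

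First I would compute these $2$-cores by the abacus procedure, taking the first-column hook lengths $h_i=\nu_i+\ell-i$ (with $\ell$ the number of parts) as the beta-set and pushing the beads down on each of the two runners; the resulting core is the staircase $\sigma_k$ whose index $k$ is controlled by the difference $c_0-c_1$ between the numbers of even and odd beta-numbers. For $\lambda$ these hook lengths fall into three groups: the single value $a+b$, the $m-2$ values $b-m+3,b-m+5,\ldots,b+m-3$ (all of one parity), and the consecutive block $1,2,\ldots,b-m+1$. Evaluating $\sum_i(-1)^{h_i}$ over the three groups, and similarly over the analogous groups for $\mu$, expresses $c_0-c_1$ purely in terms of the parities of $a-m$ and $b-m$.

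Carrying out this bookkeeping, I expect to find that $\lambda$ has $2$-core $\sigma_m$ exactly when $a-m$ is odd and $b-m$ is even, and that $\mu$ has $2$-core $\sigma_m$ exactly when $a-m$ is even and $b-m$ is odd; in every remaining parity combination the core of each of $\lambda,\mu$ is a staircase $\sigma_k$ with $k<m$, so the corresponding term is annihilated by $C^*_m$. Substituting these outcomes into $C^*_m(s(\lambda))+C^*_m(s(\mu))$ then produces exactly the three cases of the corollary. The only genuine work is this last core computation, and the main obstacle is the parity bookkeeping in the alternating sums $\sum_i(-1)^{h_i}$: one must handle the middle block of staircase hook lengths and the consecutive tail correctly, and translate $c_0-c_1$ into the right staircase index, bearing in mind that this translation is \emph{not} symmetric under $c_0-c_1\mapsto c_1-c_0$. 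Everything else is formal.
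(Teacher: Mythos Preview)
Your proposal is correct and follows precisely the route the paper intends: reduce via the identity $C^*_m(g)=C^*_m(C_m(g))$, invoke Lemma~5.4 to obtain the two explicit Schur functions, and then check which of the two indexing partitions has $2$-core $\sigma_m$. The paper itself says only that ``using Lemma~5.4 it is now easy to verify'' the corollary, so your abacus/beta-set computation of the cores is exactly the kind of verification being left to the reader; your description of the three groups of first-column hook lengths for $\lambda$ is accurate, and the analogous decomposition for $\mu$ is equally routine. One minor remark: for the argument you only need that in the remaining parity cases the core is \emph{not} $\sigma_m$; the stronger claim that it is some $\sigma_k$ with $k<m$ happens to be true here but is not required, so if the bookkeeping for that sharper statement becomes awkward you can safely drop it.
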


\q Now the  module 
$$S^aE\otimes \tbw^bE\otimes  \tbw^{m-2}E\otimes \cdots \otimes \tbw^2E\otimes E$$
 has a good filtration, e.g. by \cite[Section 4, (3)]{DStd}. Interpreting Corollary 5.5 in terms of $G(n)$-modules we obtain the following.

\begin{corollary} Assume that  $m\geq 2$. Let $a\geq m$ and $b\geq m-1$.  For all $n$ sufficiently large, the  component of the module  
$$S^aE\otimes \tbw^bE\otimes   \tbw^{m-2}E\otimes \cdots \otimes \tbw^2E\otimes E$$
 corresponding to the core $\sigma_m$ is:
\begin{align*}   
&\nabla(a+1,m-1,\ldots,2,1^{b-m+1}),  \q  \hbox{ if }     a-m    \hbox{ is odd and} \     b-m \hbox{ is  even};\cr
&\nabla(a,m-1,\ldots,2,1^{b-m+2}),  \q\q \   \hbox{ if }     a-m    \hbox{ is even }     b-m \hbox{ is  odd};\cr
&0,  \hskip 150pt   \hbox{otherwise.}
\end{align*}

\end{corollary}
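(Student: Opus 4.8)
The plan is to reduce the statement to the symmetric function identity of Corollary 5.5 via Lemma 5.1, and then to upgrade the resulting equality of characters to a genuine isomorphism of modules by invoking the good filtration. Write $M=S^aE\otimes \tbw^bE\otimes \tbw^{m-2}E\otimes \cdots \otimes \tbw^2E\otimes E$. First I would compute $\ch M$. Since the character of a tensor product is the product of the characters, and since $\ch S^aE=s(a)$ (the complete homogeneous symmetric function) while $\ch \tbw^jE=s(1^j)$ (the elementary symmetric function), we obtain
$$\ch M=s(a)\,s(1^b)\,s(1^{m-2})\cdots s(1^2)\,s(1).$$

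Next I would record that $M$ has a good filtration, as already noted in the text by \cite[Section 4, (3)]{DStd}: each tensor factor is a costandard module, namely $\nabla(a)$ or $\nabla(1^j)$, and tensor products of modules with good filtrations again have good filtrations. Consequently the core component $M(\sigma_m)$, being a sum of block components of $M$, inherits a good filtration. Applying Lemma 5.1 with $\gamma=\sigma_m$ gives
$$\ch M(\sigma_m)=C^*_{\sigma_m}(\ch M)=C^*_m(\ch M),$$
and Corollary 5.5 then evaluates this as $s(a+1,m-1,\ldots,2,1^{b-m+1})$ when $a-m$ is odd and $b-m$ is even, as $s(a,m-1,\ldots,2,1^{b-m+2})$ when $a-m$ is even and $b-m$ is odd, and as $0$ otherwise.

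Finally I would pass from this character computation to the asserted module structure. In a nonzero case, write $\lambda$ for the relevant partition, so that $\ch M(\sigma_m)=s(\lambda)=\ch\nabla(\lambda)$. Because $M(\sigma_m)$ has a good filtration, its good-filtration multiplicities $(M(\sigma_m):\nabla(\mu))$ are well defined, and since the costandard characters $s(\mu)=\ch\nabla(\mu)$ are linearly independent in $\zed X(n)$, these multiplicities are completely determined by $\ch M(\sigma_m)$. Reading them off from $s(\lambda)$ yields $(M(\sigma_m):\nabla(\lambda))=1$ and all other multiplicities zero, whence $M(\sigma_m)\cong\nabla(\lambda)$. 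In the remaining case $\ch M(\sigma_m)=0$ forces $M(\sigma_m)=0$.

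The step I expect to be the crux is this last one: in general a module is not determined by its character, so the essential input is the good filtration. A module admitting a good filtration whose character is a single Schur function $s(\lambda)$ must be isomorphic to $\nabla(\lambda)$, precisely because good-filtration multiplicities are additive on characters and the $s(\mu)$ are independent. By contrast, the symmetric function bookkeeping — computing $\ch M$ as a product and invoking Corollary 5.5 — is routine given the earlier lemmas.
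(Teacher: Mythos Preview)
Your proposal is correct and follows essentially the same approach as the paper. The paper simply states that the module has a good filtration and then says ``Interpreting Corollary 5.5 in terms of $G(n)$-modules we obtain the following''; you have spelled out exactly what this interpretation entails, namely computing the character via Lemma~5.1 and Corollary~5.5 and then using the good filtration to read off the module structure from the character.
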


We specialise to the following.

\begin{corollary} Assume that $m\geq 2$. Let $a\geq m$ and $b\geq m-1$ with $a-m$ even and $b-m$ odd. Then, for all $n$ sufficiently large,  the component of the module $$S^aE\otimes \tbw^bE\otimes \tbw^{m-2}E\otimes \cdots \otimes \tbw^2E\otimes E$$
corresponding to the core $\sigma_m$ is
$$\nabla(a,m-1,\ldots,2,1^{b-m+2}).$$
\end{corollary}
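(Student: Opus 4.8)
The plan is to read the statement off as the single parity case of Corollary 5.6. Under the standing hypotheses $m\geq 2$, $a\geq m$, $b\geq m-1$, Corollary 5.6 already identifies the $\sigma_m$-component of $S^aE\otimes \tbw^bE\otimes \tbw^{m-2}E\otimes\cdots\otimes\tbw^2E\otimes E$ according to the parities of $a-m$ and $b-m$; imposing $a-m$ even and $b-m$ odd selects precisely its second branch, giving $\nabla(a,m-1,\ldots,2,1^{b-m+2})$. So the shortest route is simply to invoke Corollary 5.6 with these parity assumptions, and there is no residual work.

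Should one want a self-contained derivation instead of citing Corollary 5.6, I would retrace the chain that produces it. First I would record that $\ch S^kE=s(k)$ and $\ch \tbw^kE=s(1^k)$, so the character of the whole tensor product is $s(a)s(1^b)s(1^{m-2})\cdots s(1)$. Since this module has a good filtration (as noted just before Corollary 5.6), Lemma 5.1 gives that the character of its $\sigma_m$-component is $C^*_m$ applied to this product. Corollary 5.5, in the case $a-m$ even and $b-m$ odd, evaluates $C^*_m(s(a)s(1^b)s(1^{m-2})\cdots s(1))$ to the single Schur function $s(a,m-1,\ldots,2,1^{b-m+2})$, for all $n$ sufficiently large, which carries over the same constraint on $n$.

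Finally I would promote this character identity to a module isomorphism: the $\sigma_m$-component is a direct summand carrying a good filtration, by the block decomposition and the vanishing of $\Ext^1_{G(n)}$ between $\nabla$'s lying in different blocks. For a good-filtration module the multiplicities $(V:\nabla(\mu))$ are determined by the formal character through the linear independence of the Schur functions $s(\mu)$; since the character here is exactly one Schur function $s(a,m-1,\ldots,2,1^{b-m+2})$, the component has a single good-filtration section and hence is isomorphic to $\nabla(a,m-1,\ldots,2,1^{b-m+2})$. There is no genuine obstacle, as the substantive combinatorics is already carried by Lemma 5.4 and Corollary 5.5; the only point deserving a word of care is this last step, that a good-filtration module whose character is a single $s(\lambda)$ must be $\nabla(\lambda)$.
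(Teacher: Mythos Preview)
Your proposal is correct and matches the paper's approach exactly: the paper presents this corollary with the single sentence ``We specialise to the following'' immediately after Corollary~5.6, so invoking Corollary~5.6 under the parity assumptions $a-m$ even, $b-m$ odd is precisely what is intended. Your optional self-contained expansion simply unwinds the proof of Corollary~5.6 itself and adds nothing new.
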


\section{Decomposable Specht modules}

\q We continue to assume that $q=-1$ and so $l=2$. Let $n,r\geq 0$ with $n\geq r$.  Recall that for  $\alpha\in\Lambda(n,r)$ we write $H(\alpha)$ for the subalgebra \\ $H(\alpha_1)\otimes\cdots\otimes H(\alpha_n)$ of $H(r)$ and that we have 

$$f(S^\alpha E)=H(r)\otimes_{H(\alpha)}K$$

$$f(\Lambda^\alpha E)=H(r)\otimes_{H(\alpha)}K_{\sgn}$$

 \q For finite strings of non-negative integers $\alpha=(\alpha_1,\dots,\alpha_a)$ and $\beta=(\beta_1,\ldots,\beta_b)$ we write $(\alpha\vert \beta)$ for the concatenation  $(\alpha_1,\ldots,\alpha_a,\beta_1,\ldots,\beta_b)$. Assume that $a+b\leq n$ and that $\deg(\alpha)=r_1$,  $\deg(\beta)=r_2$ with $r=r_1+r_2$. Then it follows that 
$$f(S^\alpha E\otimes \tbw^\beta E)=H(r)\otimes_{H(\alpha\vert\beta)} (K\otimes K_\sgn).$$

  But since $q=-1$ we have that $K_\sgn\cong K$, e.g. by \cite[Lemma 3.1]{DJ2}, and so 

$$f(S^\alpha E\otimes \tbw^\beta E)=H(r)\otimes_{H(\alpha\vert\beta)} K=M(\alpha\vert \beta).$$

\smallskip

\q Applying the Schur functor to Corollary 5.7 we get  immediately the following result.

\begin{corollary} Assume that $m\geq 2$. Let $a\geq m$ and $b\geq m-1$ with $a-m$ even and $b-m$ odd.  Then the block component  of the module $M(a,b,m-2,\ldots,2,1)$ corresponding to the core $\sigma_m$ is
$$\Sp(a,m-1,\ldots,2,1^{b-m+2}).$$
\end{corollary}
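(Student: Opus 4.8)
The statement of Corollary 5.7 concerns a $G(n)$-module, namely the component of
$$S^aE\otimes \tbw^bE\otimes \tbw^{m-2}E\otimes \cdots \otimes \tbw^2E\otimes E$$
corresponding to the core $\sigma_m$, and asserts this component equals $\nabla(a,m-1,\ldots,2,1^{b-m+2})$ under the hypotheses $a-m$ even, $b-m$ odd. The target statement lives on the Hecke algebra side and concerns the block component of the permutation module $M(a,b,m-2,\ldots,2,1)$. So the whole proof is the observation that $f$ transports the former to the latter, together with bookkeeping to check that $f$ translates each ingredient correctly.

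First I would recall the three relevant properties of $f$ collected in the excerpt: $f$ is exact, $f(\nabla(\lambda))=\Sp(\lambda)$, and --- crucially for the left-hand side --- the identity established just before this corollary, namely that since $q=-1$ the sign representation $K_\sgn$ is isomorphic to $K$, whence for a concatenation $(\alpha\vert\beta)$ one has $f(S^\alpha E\otimes \tbw^\beta E)=M(\alpha\vert\beta)$. Applying this with $\alpha=(a)$ and $\beta=(b,m-2,\ldots,2,1)$ identifies $f$ of the big tensor-product module with $M(a,b,m-2,\ldots,2,1)$. Applying exactness of $f$ to the direct-sum decomposition of that module into block (core) components --- and using that $f(\nabla(a,m-1,\ldots,2,1^{b-m+2}))=\Sp(a,m-1,\ldots,2,1^{b-m+2})$ on the $\sigma_m$-component via Corollary 5.7 --- gives the desired identification of the $\sigma_m$-block component of $M(a,b,m-2,\ldots,2,1)$ as the claimed Specht module.

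The one point that needs genuine care, rather than being purely formal, is that the notion of ``core component'' on the $G(n)$ side and ``block component'' on the $H(r)$ side correspond under $f$. This rests on the compatibility between the blocks of the Schur algebra $S(n,r)$ and the blocks of $H(r)$, both of which are governed by the $l$-core of the partition label (the block theory recalled in Section 2.2, via Cox's theorem and the relationship with cores of partition diagrams). Since $f$ is an exact functor sending $\nabla(\lambda)$ to $\Sp(\lambda)$ and respecting the labelling by partitions, it carries the component indexed by the core $\sigma_m$ to the component indexed by the same core; I would make this explicit so that taking the $\sigma_m$-component commutes with $f$. Once that commutation is in place, the corollary is immediate.

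The main (and really the only) obstacle is ensuring this compatibility of the core/block decomposition under $f$ is stated with enough precision, since everything else is a direct transcription of Corollary 5.7 through the dictionary $\nabla(\lambda)\mapsto \Sp(\lambda)$ and $S^\alpha E\otimes \tbw^\beta E\mapsto M(\alpha\vert\beta)$. In particular I expect no computation beyond confirming that the partition label $(a,m-1,\ldots,2,1^{b-m+2})$ and the composition $(a,b,m-2,\ldots,2,1)$ are exactly the images under $f$ of the $G(n)$-data appearing in Corollary 5.7, which they are by construction.
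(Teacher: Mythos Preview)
Your proposal is correct and follows exactly the paper's approach: the paper simply states that Corollary 6.1 follows by applying the Schur functor to Corollary 5.7, using the identity $f(S^\alpha E\otimes \tbw^\beta E)=M(\alpha\vert\beta)$ established just before (valid since $q=-1$ makes $K_{\sgn}\cong K$) together with $f(\nabla(\lambda))=\Sp(\lambda)$. Your additional care about the compatibility of core/block components under $f$ is a reasonable elaboration of what the paper leaves implicit.
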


\q Comparing now Corollary 4.2 with Corollary 6.1 we get the main result of this paper.

\begin{theorem} Assume that $m\geq 2$. Let $a\geq m$ and $b\geq m-1$ with $a-m$ even and $b-m$ odd.  Then we have 
$$\Sp(a,m-1,\ldots,2,1^{b-m+2})=\bigoplus_{\mu} Y(\sigma_m+2\mu)$$
where the sum is over all partitions $\mu=(c,d)$ such that $c+d=u+v$ and $(c-d,u-v)$ is $p$-special, where  $a-m=2u$ and $b-m+1=2v$. 

\end{theorem}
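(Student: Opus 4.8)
The plan is to deduce the theorem by directly comparing the two descriptions already obtained for a single object, namely the block component of the permutation module $M(a,b,m-2,\ldots,2,1)$ corresponding to the core $\sigma_m$, under the standing hypotheses $q=-1$ and $l=2$. No new analysis is required: the substantive work has been front-loaded into Corollary 4.2 (the weight-multiplicity computation of Lemma 3.5 together with the injectivity and direct-sum decomposition of $S^\alpha E$) and into Corollary 6.1 (the symmetric-function computation of Section 5 combined with the good-filtration argument of Section 5 and the Schur functor).

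First I would record that, since $l=2$, the hypotheses of Corollary 4.2 and Corollary 6.1 coincide. The congruences $a+m\equiv 0$ and $b+m\equiv 1\pmod 2$ of Corollary 4.2 are exactly the conditions that $a-m$ be even and $b-m$ be odd, which are the hypotheses of Corollary 6.1 and of the theorem. Moreover, for $l=2$ the module $M(a,b,(m-2)(l-1),\ldots,2(l-1),l-1)$ appearing in Corollary 4.2 is precisely $M(a,b,m-2,\ldots,2,1)$, the module of Corollary 6.1. The indexing parameters also match: with $l=2$ one has $a-m(l-1)=a-m=2u$ and $b-(m-1)(l-1)=b-m+1=2v$, so $u=(a-m)/2$ and $v=(b-m+1)/2$ are exactly the integers used to index $\mu$ in the statement of the theorem.

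With this bookkeeping in place the theorem is immediate. Corollary 6.1 identifies the $\sigma_m$-component of $M(a,b,m-2,\ldots,2,1)$ with the single Specht module $\Sp(a,m-1,\ldots,2,1^{b-m+2})$, while Corollary 4.2 identifies the same $\sigma_m$-component with $\bigoplus_{\mu} Y(\sigma_m+2\mu)$, where $\mu=(c,d)$ ranges over partitions with $c+d=u+v$ and $(c-d,u-v)$ $p$-special. Equating these two expressions for one and the same $H(r)$-module yields the asserted decomposition of $\Sp(a,m-1,\ldots,2,1^{b-m+2})$.

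I do not anticipate a genuine obstacle. The only point demanding care is the verification, carried out above, that the two corollaries really refer to the identical $H(r)$-module and the identical core $\sigma_m$, and that the numerology of $u$ and $v$ is consistent across the two statements; here one also notes that both corollaries are statements about the fixed algebra $H(r)$ (the ``sufficiently large $n$'' qualifier in the polynomial-module precursors disappears upon applying the Schur functor at any $n\geq r$), so there is no tension between the two formulations. Once the matching is confirmed, combining the two corollaries gives the result with no further computation.
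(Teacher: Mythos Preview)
Your proposal is correct and follows exactly the paper's own approach: the paper's proof of this theorem consists of the single sentence ``Comparing now Corollary 4.2 with Corollary 6.1 we get the main result of this paper.'' You have simply made explicit the routine bookkeeping (matching of hypotheses, modules, and the parameters $u,v$ upon specialising $l=2$) that the paper leaves to the reader.
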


\bs 

\q We give now an example of such a decomposition to point out that the number of indecomposable summands is unbounded.

\begin{example}

 Assume that $K$ has characteristic $2$. By Theorem 6.2 and using a simple inductive argument it follows that for $k\geq 1$ we have 

$$\Sp(2^k+2,1^{2^k-1})=\bigoplus_{j=1}^k Y(2^k+2^j,2^k-2^j+1).$$

\end{example}

\bs

\section{Further Remarks}

\subsection{Hook partitions}

\q As an immediate application of Theorem 6.2 for $m=2$, one obtains a decomposition for the Specht module $\Sp(a,1^b)$ where $a$ is even and $b$ is odd. We point out here that our method gives also a decomposition of $\Sp(a,1^b)$ when $a$ is odd and $b$ is even. In fact, we have the following proposition.

\begin{prop}

Let $a,b\geq1 $ and assume that $a$ and $b$ have different parity. Then we have the following decompositions

(i) For $a$ even and $b$ odd,

$$\Sp(a,1^b)= \bigoplus_{\mu} Y(\sigma_2+2\mu),$$

where the sum is over all partitions $\mu=(c,d)$, such that $c+d=u+v$ and $(c-d,u-v)$ is $p$-special, where  $a-2=2u$ and $b-1=2v$.

(ii) For $a$ odd and $b$ even,

$$\Sp(a,1^b)= \bigoplus_{\mu} Y(\sigma_1+2\mu),$$

where the sum is over all partitions $\mu=(c,d)$, such that $c+d=u+v$ and $(c-d,u-v)$ is $p$-special, where  $a-1=2u$ and $b=2v$.

\end{prop}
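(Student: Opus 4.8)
The plan is to read part (i) off Theorem 6.2 and to derive part (ii) by running the machinery of Sections 3--6 in the case $m=1$ excluded there, whose associated $2$-core is $\sigma_1=(1)$. For (i) I would apply Theorem 6.2 with $m=2$: then $\sigma_2=(2,1)$, the partition $(a,m-1,\ldots,2,1^{b-m+2})$ degenerates to $(a,1^b)$ (the block $m-1,\ldots,2$ being empty), the hypotheses $a\geq 2$ and $b\geq 1$ hold because $a$ is even and positive, and with $a-2=2u$, $b-1=2v$ the index set is precisely the one in (i).

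For (ii) the starting point is that Lemma 3.5 is stated for all $m>0$, so its part (iii) covers $m=1$ directly: with $l=2$, $a$ odd, $b$ even, $a-1=2u$ and $b=2v$ it gives, for $\mu=(c,d)$,
$$\dim L(\sigma_1+2\mu)^{(a,b)}=\begin{cases}1,& (c-d,u-v)\ p\text{-special},\\ 0,&\text{otherwise}\end{cases}$$
(the weight $(a,b)$ now has no tail, and the proof of 3.5(iii) reduces at once to this $G(2)$-computation via Steinberg's theorem). Although Corollaries 4.1 and 4.2 are stated for $m\geq 2$, their proofs use only Lemma 3.5, the decompositions (1)--(2), and the injectivity of $S^\alpha E$, so they remain valid at $m=1$ with empty tail. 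Since a partition of length at most two has $2$-core $\sigma_1$ exactly when its first part is odd and its second even, i.e.\ exactly when it equals $\sigma_1+2\mu$, the upshot is the $m=1$ form of Corollary 4.2: the block component of $M(a,b)$ corresponding to the core $\sigma_1$ is $\bigoplus_\mu Y(\sigma_1+2\mu)$, summed over $\mu=(c,d)$ with $c+d=u+v$ and $(c-d,u-v)$ $p$-special.

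It remains to identify this block component with $\Sp(a,1^b)$, which is the $m=1$ substitute for the chain Corollary 5.7 $\to$ Corollary 6.1. The character of $S^aE\otimes\tbw^bE$ is $s(a)s(1^b)$, and Pieri's formula gives $s(a)s(1^b)=s(a+1,1^{b-1})+s(a,1^b)$. For $a$ odd and $b$ even the hook $(a,1^b)$ has $2$-core $\sigma_1=(1)$ while $(a+1,1^{b-1})$ has $2$-core $\sigma_2=(2,1)$, so $C^*_{\sigma_1}(s(a)s(1^b))=s(a,1^b)$. By Lemma 5.1 the $\sigma_1$-component of $S^aE\otimes\tbw^bE$ has character $s(a,1^b)$; since this module has a good filtration by \cite[Section 4, (3)]{DStd} and $s(a,1^b)$ is a single Schur function, that component must equal $\nabla(a,1^b)$. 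Applying the Schur functor $f$ and using $q=-1$, so that $K_{\sgn}\cong K$, $f(S^aE\otimes\tbw^bE)=M(a,b)$ and $f(\nabla(a,1^b))=\Sp(a,1^b)$, shows that the $\sigma_1$-block component of $M(a,b)$ is $\Sp(a,1^b)$. Comparing the two descriptions of this block component gives the decomposition (ii).

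The step I expect to be the main obstacle is the $2$-core bookkeeping: one must check that of the two Pieri terms exactly $(a,1^b)$ lies in the core-$\sigma_1$ block (and that two-part partitions with core $\sigma_1$ are parametrised correctly by $\mu$), since it is precisely this that separates (ii) from (i) and fixes the correct core. Once the hook/core dictionary and the applicability of Lemma 3.5(iii) at $m=1$ are in hand, the two computations of the $\sigma_1$-block component of $M(a,b)$ agree and the result follows.
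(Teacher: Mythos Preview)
Your proof is correct and follows essentially the same route as the paper's. For (i) the paper likewise invokes Theorem~6.2 at $m=2$; for (ii) the paper also uses Pieri to split $S^aE\otimes\textstyle\bigwedge^bE$ into $\nabla(a+1,1^{b-1})\oplus\nabla(a,1^b)$, identifies the $\sigma_1$-component as $\nabla(a,1^b)$, applies the Schur functor to get the $\sigma_1$-block component of $M(a,b)$ as $\Sp(a,1^b)$, and then compares with the $m=1$ analogue of Corollary~4.2 --- exactly your steps, though the paper passes over the $m=1$ extension and the two-part core bookkeeping with ``it is easy to see, as in Corollary~4.2'' where you spell out the details.
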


\begin{proof}

 Part (i) follows directly from Theorem 6.2 for $m=2$. So we can move on to the second part. Let $a$ be odd and $b$ be even. 

\q We consider the tensor product $S^aE\otimes \tbw^bE$. This module decomposes as

$$S^aE\otimes \tbw^bE=\nabla(a+1,1^{b-1})\oplus \nabla(a,1^b), \eqno{(\dagger)}$$

where the partition $(a+1,1^{b-1})$ has $2$-core $\sigma_2$ and the partition $(a,1^b)$ has $2$-core $\sigma_1$.

\q Now, after applying the Schur functor to $(\dagger)$ we get, as in Section 6, that 

$$M(a,b)=\Sp(a+1,1^{b-1})\oplus \Sp(a,1^b).$$

Hence, the block component of $M(a,b)$ corresponding to the core $\sigma_1$ is $\Sp(a,1^b)$. 

\q Moreover it is easy to see, as in Corollary 4.2, that the block component of $M(a,b)$ corresponding to the core $\sigma_1$ is 

$$\bigoplus_{\mu} Y(\sigma_1+2\mu),$$

where the sum is over all partitions $\mu=(c,d)$, such that $c+d=u+v$ and $(c-d,u-v)$ is $p$-special, where  $a-1=2u$ and $b=2v$. Comparing now these two expressions we get immediately the result.

\end{proof}

\subsection{More Decomposable Specht Modules}

\q We point out here that the decomposition of the Specht modules described in Theorem 6.2, lays the foundations for the discovery of other families of decomposable Specht modules. We describe such a case here and we take our considerations further in \cite{DGMoreDeco}. For simplicity throughout this subsection we assume that $K$ has characteristic $2$. The Hecke algebras analogues of the results of this section will appear in a much more general form in \cite{DGMoreDeco}. 

\q Since $K$ has characteristic $2$ there is no need from now on to distinguish between the irreducible modules $L(\lambda)$ and $\dotL(\lambda)$.  We will need the following lemma regarding the dimension of some weight spaces for certain irreducible $G(n)$-modules.

\begin{lem}

Let $a,b\geq2$ with $a$ be even and $a\not\equiv 0\ \mod\ 4$, and $b$ be odd. Let $\mu=(\mu_1,\mu_2)$ be a two part partition  with expansion $\mu=\mu^0+2\bar{\mu}$. Then 

\begin{equation*}
\dim L(\sigma_2+2\mu)^{(a,b,2)}=
\begin{dcases*}
2\dim L(\mu)^{(u+1,v)}+\dim L(2\bar{\mu})^{(u-2,v)}, & if  $\mu^0=(2,1);$\cr
2\dim L(\mu)^{(u+1,v)}, & otherwise
\end{dcases*}
\end{equation*}

where $a=2(u+1)$ and $b=2v+1$.

\end{lem}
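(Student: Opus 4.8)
The plan is to apply Steinberg's tensor product theorem twice and reduce everything to two-variable weight multiplicities. Since the highest weight $\sigma_2+2\mu$ and the weight $(a,b,2)$ both have length at most $3$, weight multiplicities are stable under truncation and I may work throughout with $G(3)$-modules (and, as in the remark opening this subsection, identify $L$ with $\dotL$ since $\char K=2$). As $\sigma_2=(2,1,0)$ is $2$-restricted, the displayed sum $\sigma_2+2\mu$ is its Steinberg decomposition, so Steinberg's theorem gives $L(\sigma_2+2\mu)\cong L(2,1,0)\otimes\dotL(\mu)^F$, whence
$$\dim L(\sigma_2+2\mu)^{(a,b,2)}=\sum_{\alpha}\dim L(2,1,0)^{\alpha}\,\dim\dotL_3(\mu)^{\beta},$$
the sum over weights $\alpha$ of $L(2,1,0)$ with $\beta=((a,b,2)-\alpha)/2$, the term being zero unless this is integral.

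Next I exploit parity. Because $2\beta$ is even in every coordinate, only those $\alpha$ with $\alpha\equiv(a,b,2)\equiv(0,1,0)\pmod 2$ survive, i.e. $\alpha$ of parity (even, odd, even). The weights of $L(2,1,0)$ are contained in those of $\nabla(2,1,0)$, whose character is the Schur function $s_{(2,1)}$ and whose monomials are the six permutations of $(2,1,0)$ together with $(1,1,1)$. Of these only $(2,1,0)$ and $(0,1,2)$ have the required parity, and both, being in the $\Symm(3)$-orbit of the highest weight, are extremal and so occur in $L(2,1,0)$ with multiplicity one. This collapses the sum to
$$\dim L(\sigma_2+2\mu)^{(a,b,2)}=\dim\dotL_3(\mu)^{(u,v,1)}+\dim\dotL_3(\mu)^{(u+1,v,0)},$$
and the truncation functor identifies the last term (last coordinate $0$) with $\dim L(\mu)^{(u+1,v)}$. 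Thus it remains to prove
$$\dim\dotL_3(\mu)^{(u,v,1)}=\dim L(\mu)^{(u+1,v)}+\begin{cases}\dim L(2\bar\mu)^{(u-2,v)},&\mu^0=(2,1);\\ 0,&\text{otherwise.}\end{cases}$$

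Here the hypothesis $a\not\equiv 0\pmod4$ becomes decisive: it says $u+1=a/2$ is odd, so $u$ is even. I compute $\dim\dotL_3(\mu)^{(u,v,1)}$ by a second, now classical ($p=2$), application of Steinberg, writing $\dotL_3(\mu)=\dotL_3(\mu^0)\otimes\dotL_3(\bar\mu)^F$. Since the target third coordinate is $1$ while the Frobenius factor contributes only even third coordinates, the contributions come exactly from weights $(\nu_1,\nu_2,1)$ of $\dotL_3(\mu^0)$, and $u$ even forces $\nu_1$ even. Running over the four $2$-restricted partitions $\mu^0\in\{(0,0,0),(1,0,0),(1,1,0),(2,1,0)\}$ of length at most $2$ and reading off the relevant weights of these small modules (all extremal of multiplicity one, the only non-extremal candidate $(1,1,1)$ in the last case being killed by the parity of $\nu_1$), each surviving weight contributes a two-variable multiplicity $\dim L(\bar\mu)^{(\cdot,\cdot)}$ when the coordinates are even integers, and $0$ otherwise.

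Finally I match the two sides by an entirely parallel two-variable Steinberg expansion of $\dim L(\mu)^{(u+1,v)}$ and $\dim L(2\bar\mu)^{(u-2,v)}$. For $\mu^0\ne(2,1)$ at most one weight of $\dotL_3(\mu^0)$ survives and reproduces $\dim L(\mu)^{(u+1,v)}$; for $\mu^0=(2,1)$ the two extremal weights $(2,0,1)$ and $(0,2,1)$ survive and give $\dim L(\bar\mu)^{((u-2)/2,v/2)}+\dim L(\bar\mu)^{(u/2,(v-2)/2)}$, which the parallel expansions identify with $\dim L(2\bar\mu)^{(u-2,v)}+\dim L(\mu)^{(u+1,v)}$. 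Combined with the previous display this gives the formula. The conceptual content is light, being two Steinberg factorisations plus a parity count, so I expect the main obstacle to be purely the bookkeeping of these last two steps: tracking, for each $\mu^0$ and each parity of $v$, exactly which weights of $\dotL_3(\mu^0)$ and of the two-variable modules $L(\mu)$, $L(2\bar\mu)$ survive, and verifying that $u$ being even (forced by $a\not\equiv0\pmod4$) lines the two sides up precisely.
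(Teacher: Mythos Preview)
Your approach is correct and essentially identical to the paper's: both apply Steinberg's tensor product theorem to $L(\sigma_2+2\mu)=L(\sigma_2)\otimes L(\mu)^F$, use the parity of $(a,b,2)$ to isolate the contributions from the weights $(2,1,0)$ and $(0,1,2)$ of $L(\sigma_2)$, and then run a second Steinberg expansion casewise on $\mu^0\in\{0,(1),(1,1),(2,1)\}$, exploiting that $u$ is even (from $a\not\equiv 0\pmod 4$) to match $\dim L(\mu)^{(u,v,1)}$ with $\dim L(\mu)^{(u+1,v)}$ plus the extra term when $\mu^0=(2,1)$. The bookkeeping you flag as remaining is exactly the four-case check the paper writes out, and it goes through as you anticipate.
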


\begin{proof}
 
 The weights for the $G(3)$-module $L(\sigma_2)$ are
 
 $$\{(2,1,0),(2,0,1),(0,2,1),(0,1,2),(1,0,2),(1,2,0),(1,1,1)\},$$
 
 e.g. by considering the Schur function $s(2,1,0)$. Since $a$ is even and $b$ is odd the only weights in this list that can contribute to the $(a,b,2)$ weight space of  $L(\sigma_2+2\mu)=L(\sigma_2)\otimes L(\mu)^F$ are $(2,1,0)$ and $(0,1,2)$. In these cases we can have that $(a,b,2)=(2,1)+2\rho$ and $(a,b,2)=(0,1,2)+2\xi$ for some  weights $\rho$ and $\xi$ of $L(\mu)$. Therefore we get that 
 
$$\dim L(\sigma_2+2\mu)^{(a,b,2)}=\dim L(2\mu)^{(a-2,b-1,2)}+\dim L(2\mu)^{(a,b-1)}.$$

\q First notice that if $\mu^0=0$ then by Steinberg's tensor product theorem we get directly that $L(2\mu)^{(a-2,b-1,2)}=0$. Moreover since $a\not\equiv 0\ \mod\ 4$ we also have that $L(2\mu)^{(a,b-1)}=0$ and so the result holds in this case. 

\q Hence, we might assume that $\mu^0\neq 0$ and so it has one of the forms $(1),(1,1)$ or $(2,1)$.

\q  We write $a$ and $b$ in the form $a=2(u+1)$ and $b=2v+1$. Since $a\not\equiv 0\ \mod\ 4$, then $u$ must be even. We have that

 $$\dim L(2\mu)^{(a-2,b-1,2)}=\dim L(\mu)^{(u,v,1)}\ \ \ {\text{and}}$$ 
 
 $$\dim L(2\mu)^{(a,b-1)}=\dim L(\mu)^{(u+1,v)}.$$ 

\q We consider now all the possible remaining cases for $\mu^0$. Using Steinberg's tensor product theorem one easily obtains the following.

i) Let $\mu^0=(1)$.

Then  $\dim L(\mu)^{(u,v,1)}=\dim L(2\bar{\mu})^{(u,v)}$. 

Also $\dim L(\mu)^{(u+1,v)}=\dim L(2\bar{\mu})^{(u,v)}$, since $u$ is even, and the result holds.

ii) Let $\mu^0=(1,1)$.

As above we have that $\dim L(\mu)^{(u,v,1)}=\dim L(2\bar{\mu})^{(u,v-1)}$, since $u$ is even.

Moreover, $\dim L(\mu)^{(u+1,v)}=\dim L(2\bar{\mu})^{(u,v-1)}$ and so we get again the result.

iii) Finally let $\mu^0=(2,1)$. 

Here we get that $\dim L(\mu)^{(u,v,1)}=\dim L(2\bar{\mu})^{(u-2,v)}+\dim L(2\bar{\mu})^{(u,v-2)}$. 

On the other hand we have that $\dim L(\mu)^{(u+1,v)}=\dim L(2\bar{\mu})^{(u,v-2)}$.

 This completes all the cases and proves the result.

\end{proof}

\begin{prop}

Let $a\geq4 , b\geq 3$ with $a$ even and $a\not\equiv 0\ \mod\ 4$, and let $b$ be odd. Then

$$\Sp(a,3,1^{b-1})=\bigoplus_{\mu} Y(\sigma_2+2\mu)\ \oplus \ \bigoplus_{\rho}Y(\sigma_2+2\rho),$$

where the sums are over all partitions $\mu$ and $\rho$ such that,

 $\mu=(\mu_1,\mu_2,1)$, $\mu_1+\mu_2=u+v$ and $(\mu_1-\mu_2,u-v)$ is $2$-special; and

$\rho=(\rho_1,\rho_2)$ with $\rho=\sigma_2+2\bar{\rho}$, $2(\bar{\rho}_1+\bar{\rho}_2) =u+v-2$ and $(2(\bar{\rho}_1-\bar{\rho}_2),u-v-2)$ is $2$-special, where  $a=2(u+1)$ and $b=2v+1$ for some $u,v\geq 1$.
\end{prop}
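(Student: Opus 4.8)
The plan is to imitate the strategy of Section 6: realise a single block component of a permutation module both as a sum of Specht modules and as a sum of Young modules, and compare. The new feature is that the Specht side will contain, besides $\Sp(a,3,1^{b-1})$, two copies of the \emph{hook} Specht module $\Sp(a+2,1^b)$, whose own decomposition is already available from Section 7.1; subtracting it off will isolate $\Sp(a,3,1^{b-1})$. The module I would use is the polynomial $G(n)$-module $V=S^aE\otimes\tbw^bE\otimes S^2E$, which has a good filtration and satisfies $f(V)=M(a,b,2)$ (every sign module becoming trivial for $q=-1$, exactly as in the computation preceding Corollary 6.1), while the Young-module side will be read off from the injective module $S^{(a,b,2)}E$, which also has $f(S^{(a,b,2)}E)=M(a,b,2)$.

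First I would determine the core-$\sigma_2$ component of $V$. From $\ch V=s(a)s(1^b)s(2)$ and Pieri's Formula one has $s(a)s(1^b)=s(a+1,1^{b-1})+s(a,1^b)$, and a second application of Pieri for $s(2)$ together with a $2$-core bookkeeping of the resulting partitions — carried out just as in Lemma 5.4 and Corollary 5.5 — should yield
$$C^*_2\bigl(s(a)s(1^b)s(2)\bigr)=2\,s(a+2,1^b)+s(a,3,1^{b-1}),$$
the hypotheses $a\geq4$, $b\geq3$ ensuring that the only core-$\sigma_2$ partitions occurring are the hook $(a+2,1^b)$ (twice) and $(a,3,1^{b-1})$. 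Interpreting this as in Corollary 5.6, the component $V(\sigma_2)$ has a good filtration with sections $\nabla(a+2,1^b)$ (twice) and $\nabla(a,3,1^{b-1})$. Applying the exact Schur functor and $f(\nabla(\lambda))=\Sp(\lambda)$, the block component $W$ of $M(a,b,2)$ of core $\sigma_2$ acquires a Specht filtration with factors $\Sp(a+2,1^b)$ (twice) and $\Sp(a,3,1^{b-1})$; in particular $[W]=2\,[\Sp(a+2,1^b)]+[\Sp(a,3,1^{b-1})]$ in the Grothendieck group.

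Next I would compute $W$ as a sum of Young modules. Since $M(a,b,2)=f(S^{(a,b,2)}E)$ and $S^{(a,b,2)}E$ is injective, its decomposition $(1)$ and the formula $(2)$ give
$$W=\bigoplus_{\mu}Y(\sigma_2+2\mu)^{(\dim L(\sigma_2+2\mu)^{(a,b,2)})},$$
the two-part contributions being supplied by the preceding lemma (this is exactly where $a\not\equiv0\bmod4$ is used, forcing $u$ even). The hook $(a+2,1^b)$ has $a+2$ even and $b$ odd, so Proposition 7.1(i) writes $\Sp(a+2,1^b)$ as an explicit sum $\bigoplus_{\nu}Y(\sigma_2+2\nu)$; these $\nu$ should match the term $2\dim L(\mu)^{(u+1,v)}$ of the preceding lemma, the factor $2$ accounting for the two copies. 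Removing $2\,\Sp(a+2,1^b)$ from $W$ then leaves precisely the two families of the statement: the first sum ($\mu=(\mu_1,\mu_2,1)$) from the longer weights $\sigma_2+2\mu$, and the second sum ($\rho=\sigma_2+2\bar{\rho}$, i.e. $\rho^0=\sigma_2$) from the residual term $\dim L(2\bar{\mu})^{(u-2,v)}$, which is present only when $\mu^0=(2,1)$.

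The step requiring the most care — and the main obstacle — is to promote the Grothendieck-group identity $[\Sp(a,3,1^{b-1})]=[W]-2[\Sp(a+2,1^b)]$ to an isomorphism of modules, since $\nabla(a+2,1^b)$ and $\nabla(a,3,1^{b-1})$ lie in the same block (so $V(\sigma_2)$ need not split over $G(n)$, and indeed $V$ is not injective). Ordering the good filtration of $V(\sigma_2)$ with the minimal weight $(a,3,1^{b-1})$ at the bottom, as permitted by the reordering principle recalled in Section 2.2, and applying $f$ exhibits $\Sp(a,3,1^{b-1})$ as a submodule of $W$ with quotient $\Sp(a+2,1^b)^{(2)}$. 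Comparing the explicit Young-module multiplicities shows that all summands of $\Sp(a+2,1^b)^{(2)}$ occur in $W$, so by Krull--Schmidt $W\cong\Sp(a+2,1^b)^{(2)}\oplus Z$ with $Z=\bigoplus_{\mu}Y(\sigma_2+2\mu)\oplus\bigoplus_{\rho}Y(\sigma_2+2\rho)$ the asserted right-hand side and $[Z]=[\Sp(a,3,1^{b-1})]$. Finally, since $M(a,b,2)$ is self-dual, so are $W$ and the Young-module sum $\Sp(a+2,1^b)^{(2)}$; dualising the quotient map produces an embedding $\Sp(a+2,1^b)^{(2)}\hookrightarrow W$, and composing the projection $W\to Z$ with the inclusion of the submodule $\Sp(a,3,1^{b-1})$ gives a map $\Sp(a,3,1^{b-1})\to Z$ which a dimension count (together with injectivity, the delicate point that the self-dual structure is designed to settle) forces to be an isomorphism. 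It is this identification, rather than the essentially routine Pieri and weight-space computations, that carries the real content.
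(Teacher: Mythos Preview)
Your overall strategy coincides with the paper's: study the $\sigma_2$-component of $S^aE\otimes\tbw^bE\otimes S^2E$, compare its Young-module decomposition (via Lemma 7.2.1) with its Specht-filtration, and cancel two copies of $\Sp(a+2,1^b)$ using the hook result. The divergence is precisely at the point you flag as ``the main obstacle'', and your proposed resolution does not work.

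You correctly obtain a short exact sequence
\[
0\to \Sp(a,3,1^{b-1})\to W\to \Sp(a+2,1^b)^{(2)}\to 0
\]
and a Krull--Schmidt decomposition $W\cong \Sp(a+2,1^b)^{(2)}\oplus Z$ with $Z$ the asserted sum of Young modules. But self-duality of $W$ and of $\Sp(a+2,1^b)^{(2)}$ is not enough to force the composite $\Sp(a,3,1^{b-1})\hookrightarrow W\twoheadrightarrow Z$ to be injective. For a toy counterexample take $P=K[t]/(t^2)$, $L=K$, $W=P\oplus L$ (self-dual), $B=L$ (self-dual), and the surjection $(x,l)\mapsto \bar x\in P/tP$: here $W\cong B\oplus Z$ with $Z=P$, yet the kernel of the surjection is $L\oplus L\not\cong P$. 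So the passage from ``$[A]=[Z]$ in the Grothendieck group'' to ``$A\cong Z$'' genuinely requires more input than self-duality; your parenthetical ``injectivity, the delicate point that the self-dual structure is designed to settle'' is exactly the unproved step.

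The paper closes this gap \emph{before} applying the Schur functor, not after. Using \cite[Section 4.2 (17)]{q_Schur} it reduces $\Ext^1_{G(n)}(\nabla(a+2,1^b),\nabla(a,3,1^{b-1}))$ to $\Ext^1_{G(2)}(\nabla(a+1),\nabla(a-1,2))$, and the latter vanishes precisely because $a\not\equiv 0\bmod 4$. Hence the good filtration of $V(\sigma_2)$ splits as $\nabla(a,3,1^{b-1})\oplus\nabla(a+2,1^b)^{(2)}$, the Schur functor gives $W=\Sp(a,3,1^{b-1})\oplus\Sp(a+2,1^b)^{(2)}$ as an honest direct sum, and Krull--Schmidt finishes at once. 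Note in particular that $a\not\equiv 0\bmod 4$ is used twice: once in Lemma 7.2.1 (as you observe) and once for this $\Ext^1$ vanishing, which is where the real content lies.
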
 

\begin{proof}

We consider the $G(n)$-module $S^aE\otimes \tbw ^bE\otimes S^2E$, for $n$ sufficiently large. This has the following decomposition 

$$S^aE\otimes \tbw^bE\otimes S^2E=(\nabla(a+1,1^{b-1})\otimes S^2E)\oplus (\nabla(a,1^b)\otimes S^2E),$$

where the partition $(a+1,1^{b-1})$ has $2$-core $\sigma_1$ and $(a,1^b)$ has $2$-core $\sigma_2$.

\q Now using Pieri's Formula we can easily see that the block component of  $\nabla(a+1,1^{b-1})\otimes S^2E$ corresponding to the core $\sigma_2$ is just $\nabla(a+2,1^b)$.

\q Let $V$ be the block component of the module $\nabla(a,1^b)\otimes S^2E$ corresponding to the core $\sigma_2$. Then $V$ has a good filtration and again by Pieri's Formula we see that $V$ actually fits in the short exact sequence

$$0\rightarrow \nabla(a,3,1^{b-1})\rightarrow V\rightarrow \nabla(a+2,1^b)\rightarrow 0.$$

By \cite[Section 4.2 (17)]{q_Schur}, one has that

$$\Ext^1_{G(n)}( \nabla(a+2,1^b),\nabla(a,3,1^{b-1}))=\Ext^1_{G(2)}( \nabla(a+1),\nabla(a-1,2)),$$

 and since $a\not\equiv 0\ \mod\ 4$, we have that $\Ext^1_{G(2)}( \nabla(a+1),\nabla(a-1,2))=0$, see for e.g. \cite[Corollary 5.12]{DGExtensions}. Therefore, $V=\nabla(a,3,1^{b-1})\oplus  \nabla(a+2,1^b)$. 

\q Hence, the block component of  $S^aE\otimes \tbw ^bE\otimes S^2E$ corresponding to the core $\sigma_2$ is 

$$\nabla(a,3,1^{b-1})\oplus \nabla(a+2,1^b)\oplus \nabla(a+2,1^b).$$

\q Now as in Sections 4 and 6, when we apply the Schur functor to the $G(n)$-module $S^aE\otimes \tbw ^bE\otimes S^2E$  and restrict attention to the block component of $M(a,b,2)$ corresponding to the core $\sigma_2$, we get 

$$\Sp(a,3,1^{b-1})\oplus \Sp(a+2,1^b)\oplus \Sp(a+2,1^b)=\bigoplus_{\mu} Y(\sigma_2+2\mu)^{(d_\mu)},$$

where the sum is over all partitions $\mu=(\mu_1,\mu_2,\mu_3)$ with

 $d_\mu=\dim L(\sigma_2+2\mu)^{(a,b,2)}\neq 0$.

\q Assume first that $\mu_3\neq0$. If $L(\sigma_2+2\mu)^{(a,b,2)}\neq 0$, then $(a,b,2)\leq \sigma_2+2\mu$ and so we must have $\mu_3=1$.

Therefore in this case 

\begin{align*}
\dim L(\sigma_2+2\mu)^{(a,b,2)}&=\dim L(\sigma_2+2(\mu-\omega_3))^{(a-2,b-2)}\cr
&=\dim L(2(\mu-\omega_3))^{(a-4,b-3)}.
\end{align*}

 We write $a=2(u+1)$ and $b=2v+1$. Then 
 
 $\dim L(2(\mu-\omega_3))^{(a-4,b-3)}=\dim L(\mu_1-1,\mu_2-1)^{(u-1,v-1)}$, and by Lemma 3.3 this dimension is $1$ if $(\mu_1-\mu_2,u-v)$ is $2$-special and $0$ otherwise.

\q Assume now that $\mu_3=0$ and so $\mu=(\mu_1,\mu_2)$. Then Lemma 7.2.1 and Lemma 3.3 give the dimension of $L(\sigma_2+2(\mu_1,\mu_2))^{(a,b,2)}$. 

\q Putting these two points together we conclude that the direct sum 

 $\Sp(a,3,1^{b-1})\oplus \Sp(a+2,1^b)\oplus \Sp(a+2,1^b)$, decomposes as

$$\bigoplus_{\mu} Y(\sigma_2+2\mu)\ \oplus\ \bigoplus_{\nu}Y(\sigma_2+2\nu)^{(2)} \ \oplus \ \bigoplus_{\rho}Y(\sigma_2+2\rho),  \eqno{(1)}$$

where the sums are over all partitions $\mu,\nu$ and $\rho$ respectively such that,

$\mu=(\mu_1,\mu_2,1)$, $\mu_1+\mu_2=u+v$ and $(\mu_1-\mu_2,u-v)$ is $2$-special;

$\nu=(\nu_1,\nu_2)$, $\nu_1+\nu_2=u+v+1$ and $(\nu_1-\nu_2,u-v+1)$ is $2$-special; 

$\rho=(\rho_1,\rho_2)$ with $\rho=\sigma_2+2\bar{\rho}$, $2(\bar{\rho}_1+\bar{\rho}_2) =u+v-2$ and $(2(\bar{\rho}_1-\bar{\rho}_2),u-v-2)$ is $2$-special.

\q On the other hand, by Theorem 6.2 we have 

$$\Sp(a+2,1^b)=\bigoplus_{\nu} Y(\sigma_2+2\nu), \eqno{(2)}$$
where the sum is over all partitions $\nu=(\nu_1,\nu_2)$ such that $\nu_1+\nu_2=u+v+1$ and $(\nu_1-\nu_2, u-v+1)$ is $2$-special.

\q Comparing now the decompositions $(1)$ and $(2)$ we get immediately that 

$$\Sp(a,3,1^{b-1})=\bigoplus_{\mu}Y(\sigma_2+2\mu)\ \oplus \ \bigoplus_{\rho}Y(\sigma_2+2\rho),$$

where the sums run over all partitions $\mu$ and $\rho$ as described in the statement.
\end{proof}

\smallskip

\begin{rmk}
{\rm{Proposition 7.2.2 gives many new decomposable Specht modules of the form $\Sp(a,3,1^b)$ which do not appear in the list of Dodge and Fayers, \cite[Theorem 3.1 and Corollary 3.2]{DF}. For instance we have the following example.}}

\end{rmk}

\begin{exam}

For $a=14$ and $b=9$ we have that 

$$\Sp(14,3,1^8)=Y(14,9,2)\oplus Y(18,5,2)\oplus Y(14,11).$$

\end{exam}

\medskip

\q Of course we can now obtain new decomposable Specht modules by considering the linear duals of the modules appeared above. More precisely, by \cite[Theorem 8.15]{James} we have that in characteristic $2$, $\Sp(\lambda)^*=\Sp(\lambda')$, where $\Sp(\lambda)^*$ is the linear dual of $\Sp(\lambda)$ and $\lambda'$ is the transpose of the partition $\lambda$. Moreover, we have that the Young modules are self-dual modules.

\q Therefore by considering  the linear duals of the modules appeared in Proposition 7.2.2 we get the following result.

\begin{corol}

Let $a\geq4 , b\geq 3$ with $a$ even and $a\not\equiv 0\ \mod\ 4$, and $b$ odd. Then

$$\Sp(b+1,2,2,1^{a-3})=\bigoplus_{\mu} Y(\sigma_2+2\mu)\ \oplus \ \bigoplus_{\rho}Y(\sigma_2+2\rho),$$

where the sums are over all partitions $\mu$ and $\rho$ such that,

 $\mu=(\mu_1,\mu_2,1)$, $\mu_1+\mu_2=u+v$ and $(\mu_1-\mu_2, u-v)$ is $2$-special; and

$\rho=(\rho_1,\rho_2)$ with $\rho=\sigma_2+2\bar{\rho}$, $2(\bar{\rho}_1+\bar{\rho}_2) =u+v-2$ and $(2(\bar{\rho}_1-\bar{\rho}_2), u-v-2)$ is $2$-special, where  $a=2(u+1)$ and $b=2v+1$ for some $u,v\geq 1$.

\end{corol}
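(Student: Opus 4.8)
The plan is to obtain this statement as a formal consequence of Proposition 7.2.2 by applying the linear duality functor, exactly as flagged in the preceding remark. The two ingredients needed are already recorded in the text: in characteristic $2$ one has $\Sp(\lambda)^*=\Sp(\lambda')$ by \cite[Theorem 8.15]{James}, where $\lambda'$ is the transpose partition, and each Young module is self-dual.

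First I would take the linear dual of both sides of the decomposition
$$\Sp(a,3,1^{b-1})=\bigoplus_{\mu}Y(\sigma_2+2\mu)\ \oplus \ \bigoplus_{\rho}Y(\sigma_2+2\rho)$$
furnished by Proposition 7.2.2. Since linear duality is an additive (contravariant) exact functor commuting with finite direct sums, the right-hand side becomes $\bigoplus_{\mu}Y(\sigma_2+2\mu)^*\oplus\bigoplus_{\rho}Y(\sigma_2+2\rho)^*$, and by self-duality of the Young modules this expression is literally unchanged. In particular the indexing conditions on $\mu$ and $\rho$ (the $2$-special conditions, together with $a=2(u+1)$, $b=2v+1$) carry over verbatim, so the right-hand side is already in the desired form.

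The only genuine computation is on the left-hand side, where I would verify that the transpose of $\lambda=(a,3,1^{b-1})$ is $\lambda'=(b+1,2,2,1^{a-3})$. The diagram of $\lambda$ has $2+(b-1)=b+1$ rows, so the first column has height $b+1$, giving $\lambda'_1=b+1$; exactly two parts of $\lambda$ are $\geq 2$ and $\geq 3$, namely $a$ and $3$ (using $a\geq 4$), so $\lambda'_2=\lambda'_3=2$; and for $4\leq j\leq a$ only the first part $a$ survives, producing $a-3$ trailing $1$'s. Hence $\Sp(a,3,1^{b-1})^*=\Sp(b+1,2,2,1^{a-3})$, which is precisely the module in the statement.

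Combining the two computations yields the claimed decomposition. I do not expect any real obstacle here: all the substance is contained in Proposition 7.2.2, and this corollary is a purely formal consequence of duality together with the transpose computation. The one point requiring care is getting the transpose partition correct; in particular the hypothesis $a\geq 4$ guarantees $a>3$, so that the two columns of height $2$ and the $a-3$ columns of height $1$ appear exactly as written, rather than collapsing.
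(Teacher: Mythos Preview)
Your argument is correct and is exactly the approach the paper takes: dualise the decomposition of Proposition~7.2.2, use $\Sp(\lambda)^*=\Sp(\lambda')$ in characteristic $2$ together with self-duality of Young modules, and compute the transpose $(a,3,1^{b-1})'=(b+1,2,2,1^{a-3})$. The paper does not even state the transpose computation explicitly, so your write-up is, if anything, slightly more detailed.
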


\section*{Acknowledgements}

The second author gratefully acknowledges the support of The Royal Society through a University Research Fellowship.

\end{document}